\theoremstyle{plain}
\newtheorem{theorem}{Theorem}[section]
\newtheorem{cor}{Corollary}[theorem]
\theoremstyle{definition}
\newtheorem{remark}{Remark}[section]
\begin{document}
\title[Approximate Birkhoff-James orthogonality and smoothness in $\mathbb{L}(\mathbb{X},\mathbb{Y})$]{Approximate Birkhoff-James orthogonality and smoothness in the space of bounded linear operators}
\author[Arpita Mal, Kallol Paul, T.S.S.R.K. Rao and  Debmalya Sain]{Arpita Mal, Kallol Paul, T.S.S.R.K. Rao and  Debmalya Sain}

\newcommand{\acr}{\newline\indent}
\address[Mal]{Department of Mathematics\\ Jadavpur University\\ Kolkata 700032\\ West Bengal\\ INDIA}
\email{arpitamalju@gmail.com}

\address[Paul]{Department of Mathematics\\ Jadavpur University\\ Kolkata 700032\\ West Bengal\\ INDIA}
\email{kalloldada@gmail.com}

\address[Rao] {Theoretical Statistics and Mathematics Unit\\ Indian Statistical Institute\\ Bangalore\\ India}
\email{tss@isibang.ac.in}

\address[Sain]{Department of Mathematics\\ Indian Institute of Science\\ Bengaluru 560012\\ Karnataka \\India\\ }
\email{saindebmalya@gmail.com}

\thanks{The first and second author acknowledges the generosity of Indian Statistical Institute, Bangalore,  and in particular, Professor T.S.S.R.K. Rao,   for supporting the visit to the Institute during June 2018. This research paper originated from that visit. First author would like to thank UGC, Govt. of India for the financial support. The research of Prof. Paul  is supported by project MATRICS(MTR/2017/000059)  of DST, Govt. of India. The research of Dr. Debmalya Sain is sponsored by Dr. D.S. Kothari Postdoctoral Fellowship under the mentorship of Prof. Gadadhar Misra. Dr. Sain feels elated to acknowledge the motivating presence of his younger brother Debdoot in every sphere of his life. } 

\subjclass[2010]{Primary 46B28, Secondary 47L05, 46B20}
\keywords{Orthogonality; linear operators; M-ideal; L-ideal; Smoothness}

\begin{abstract}
We study approximate Birkhoff-James orthogonality of bounded linear operators defined between normed linear spaces $\mathbb{X}$ and $\mathbb{Y}.$ As an application of the results obtained, we characterize smoothness of a bounded linear operator $T$ under the condition that $\mathbb{K}(\mathbb{X},\mathbb{Y}),$ the space of compact linear operators is an $M-$ideal in $\mathbb{L}(\mathbb{X},\mathbb{Y}),$ the space of bounded linear operators.
\end{abstract}

\maketitle
\section{Introduction.} 
Birkhoff-James orthogonality is one of the most important notions of orthogonality defined in a normed linear space. It plays an important role in determining the geometry of Banach space like smoothness, strict convexity etc. Because of its importance, several mathematicians including Dragomir \cite{D}, Chmieli\'{n}ski \cite{C} have generalized the notion of Birkhoff-James orthogonality in a normed linear space. The purpose of this paper is to characterize Birkhoff-James orthogonality and approximate Birkhoff-James orthogonality in the space of bounded linear operators. Using these characterizations we study smoothness of bounded linear operators defined between normed linear spaces. Before proceeding further, we introduce relevant notations and terminologies to be used throughout the paper.

Let $\mathbb{X},\mathbb{Y}$ denote normed linear spaces. Throughout the paper we assume that the normed linear spaces are real and of dimension greater than or equal to $2.$ Let $B_{\mathbb{X}}$ and $S_{\mathbb{X}}$ denote the unit ball and unit sphere of $\mathbb{X}$ respectively, i.e., $B_{\mathbb{X}}=\{x\in \mathbb{X}:\|x\|\leq 1\}$ and $S_{\mathbb{X}}=\{x\in \mathbb{X}:\|x\|= 1\}.$ Let $\mathbb{X}^*$ denote the dual space of $\mathbb{X}$ and $Ext({B_{\mathbb{X}}})$ denote the set of all extreme points of the unit ball of $\mathbb{X}.$ Let $\mathbb{L}(\mathbb{X},\mathbb{Y})$ and $\mathbb{K}(\mathbb{X},\mathbb{Y})$ denote the space of all bounded and compact linear operators respectively from $\mathbb{X}$ to $\mathbb{Y}.$ For $T\in \mathbb{L}(\mathbb{X},\mathbb{Y}),$ let $M_T$ denote the norm attainment set of $T$, i.e., $M_T=\{x\in S_{\mathbb{X}}:\|Tx\|=\|T\|\}.$ A sequence $\{x_n\}\subseteq S_{\mathbb{X}}$ is said to be a norming sequence for a bounded linear operator $T$, if $\|Tx_n\|\to \|T\|.$

For $x,y\in \mathbb{X},$ $x$ is said to Birkhoff-James orthogonal \cite{B,J} to $y$, written as $x\perp_B y$ if $\|x+\lambda y\|\geq \|x\|$ for all $\lambda \in \mathbb{R}.$ In \cite{Sa}, the author introduced the notions of sets $x^+$ and $x^-$ as follows.\\
For $x,y\in \mathbb{X},$ we say that $y\in x^+$ if $\|x+\lambda y\|\geq \|x\|$ for all $\lambda \geq 0$ and $y\in x^-$ if $\|x+\lambda y\|\geq \|x\|$ for all $\lambda \leq 0.$\\
 Dragomir \cite{D} and Chmieli\'{n}ski \cite{C} defined approximate Birkhoff-James orthogonality in the following way.\\
For $x,y\in \mathbb{X}$ and $\epsilon \in [0,1),$ $x\perp_D^{\epsilon}y$ if $\|x+\lambda y\|\geq \sqrt{1-\epsilon^2}\|x\|$ for all  $\lambda \in \mathbb{R}.$ \\
Let $x,y\in \mathbb{X}$ and $\epsilon \in [0,1).$ Then $x\perp_B^{\epsilon}y$ if $\|x+\lambda y\|^2\geq \|x\|^2-2\epsilon\|x\|\|\lambda y\|$ for all  $\lambda \in \mathbb{R}$.

An element $x\in \mathbb{X}\setminus \{0\}$ is said to be a smooth point of $\mathbb{X}$ if $J(x)$ is singleton, where $J(x)=\{f\in S_{\mathbb{X}^*}:f(x)=\|x\|\}.$ We  further need the notions of $L-$ideal and $M-$ideal \cite{HWW} in a Banach space which are defined as follows.

A linear projection $P$ on $\mathbb{X}$ is called an $L-$projection  if $\|x\|=\|Px\|+\|x-Px\|$ for all $x\in \mathbb{X}.$ A closed subspace $J \subseteq \mathbb{X}$ is called an $L-$summand (or $L-$ideal) if it is the range of an $L-$projection.  A closed subspace $J \subseteq \mathbb{X}$ is called an  $M-$ideal if $J^0$ is an $L-$summand of $\mathbb{X}^*,$ where $J^0=\{f\in \mathbb{X}^*:f(x)=0 ~\forall ~x\in J\},$ is the annihilator of $J.$ Note that if $P$ is a contractive projection, then for $x\in range(P)$ and $y\in \ker(P),$ $x\perp_B y.$  \\

In this paper, we characterize approximate Birkhoff-James orthogonality $(\perp_D^{\epsilon})$ in the space of compact linear operators defined between arbitrary Banach spaces. In \cite[Th. 3.2]{PSM}, the authors characterized approximate Birkhoff-James orthogonality ($\perp_B^{\epsilon}$) in the space of compact linear operators when the domain space is reflexive. We obtain an analogous result for bounded linear operators under some restriction on the norm attainment set of the operator. In \cite[Th. 2.1]{SPM} and \cite[Th. 2.1]{PSG}, the authors characterized Birkhoff-James orthogonality of compact linear operators when the domain space is reflexive. Here we generalize these two results to obtain characterization of Birkhoff-James orthogonality ``$ T \bot_B A$'', when $\mathbb{K}(\mathbb{X},\mathbb{Y})$ is an $M-$ideal in $\mathbb{L}(\mathbb{X},\mathbb{Y})$ and $dist(T,\mathbb{K}(\mathbb{X},\mathbb{Y})<\|T\|.$  In \cite[Th. 1]{GY}, Grz\c a\' slewicz  and Younis characterized smooth points of $ \mathbb{L}(\ell^p,E),$  where $1<p<\infty,$ $E$ is a Banach space and $\mathbb{K}(\ell^p,E)$ is an $M-$ideal in  $\mathbb{L}(\ell^p,E).$  We prove that the necessary part of the characterization  holds for smoothness of arbitrary $T\in \mathbb{L}(\mathbb{X},\mathbb{Y}),$ where $\mathbb{X},\mathbb{Y}$ are arbitrary normed linear spaces.

\section{Main results.}
 We begin this section with a characterization of  approximate Birkhoff-James orthogonality ($\perp_D^{\epsilon}$) in terms of extreme linear functionals. In \cite[Th. 1.1, pp 170]{S}, Singer characterized Birkhoff-James orthogonality. He proved the following theorem.

\begin{theorem}\cite[Th. 1.1, pp 170]{S}\label{th.singer}
Let $\mathbb{X}$ be a normed linear space. Then for $x,y\in \mathbb{X},~x\perp_B y \Leftrightarrow \exists~ \phi,\psi \in Ext(B_{\mathbb{X}^*})$ and $\alpha \in [0,1]$ such that $\phi(x)=\psi(x)=\|x\|$ and $\alpha \phi(y)+(1-\alpha)\psi(y)=0.$
\end{theorem}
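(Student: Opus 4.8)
The plan is to derive Singer's theorem from James's classical characterization of Birkhoff-James orthogonality together with Bauer's maximum principle. I may assume $x\neq 0$ and, after normalizing, that $\|x\|=1$; the case $x=0$ is vacuous. Writing $J(x)=\{f\in S_{\mathbb{X}^*}:f(x)=\|x\|\}$, the starting point is James's theorem: $x\perp_B y$ if and only if there exists $f\in J(x)$ with $f(y)=0$. Thus the entire statement reduces to showing that $0$ lies in the scalar set $\{f(y):f\in J(x)\}$ precisely when the asserted extreme functionals $\phi,\psi$ and scalar $\alpha$ exist.

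First I would record the structural properties of $J(x)$. By Hahn-Banach it is nonempty, it is clearly convex, and as a weak*-closed subset of the weak*-compact ball $B_{\mathbb{X}^*}$ (Banach-Alaoglu) it is weak*-compact. Crucially, $J(x)$ is a face of $B_{\mathbb{X}^*}$: if $f\in J(x)$ is written as $f=\frac{1}{2}(g+h)$ with $g,h\in B_{\mathbb{X}^*}$, then $1=f(x)=\frac{1}{2}(g(x)+h(x))\le 1$ forces $g(x)=h(x)=1$, so $g,h\in J(x)$. Since every extreme point of a face is also an extreme point of the ambient convex set, every extreme point of $J(x)$ belongs to $Ext(B_{\mathbb{X}^*})$.

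Next I would analyze the evaluation map. The functional $f\mapsto f(y)$ is weak*-continuous and affine on $J(x)$, so its image is a compact interval $[m,M]$, and by Bauer's maximum principle both the minimum $m$ and the maximum $M$ are attained at extreme points of $J(x)$; call them $\psi$ and $\phi$, so that $\psi(y)=m$, $\phi(y)=M$, and $\phi,\psi\in Ext(B_{\mathbb{X}^*})$ with $\phi(x)=\psi(x)=1$. Now if $x\perp_B y$, James gives $0\in[m,M]$, i.e. $\psi(y)\le 0\le\phi(y)$; since $\alpha\mapsto \alpha\phi(y)+(1-\alpha)\psi(y)$ is continuous and runs from $\psi(y)\le 0$ at $\alpha=0$ to $\phi(y)\ge 0$ at $\alpha=1$, the intermediate value theorem produces the required $\alpha\in[0,1]$.

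For the converse, given $\phi,\psi,\alpha$ as in the statement, set $g=\alpha\phi+(1-\alpha)\psi$. Then $\|g\|\le 1$, $g(x)=\|x\|$, and $g(y)=0$, so for every $\lambda\in\mathbb{R}$ we get $\|x+\lambda y\|\ge g(x+\lambda y)=g(x)=\|x\|$, that is $x\perp_B y$. I expect the main technical point to be the passage from arbitrary norming functionals to \emph{extreme} ones: this is exactly where the face property of $J(x)$ and Bauer's principle do the work, guaranteeing that the endpoints of the interval $[m,M]$ can be realized by extreme points of the dual ball rather than merely by norm-one functionals.
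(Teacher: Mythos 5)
Your proof is correct, but note that the paper does not actually prove this statement --- it is quoted verbatim from Singer's book \cite{S}; the nearest argument in the paper is the proof of its generalization, Theorem \ref{th.epsilond}, and that proof takes a genuinely different route from yours. There, the norming functional $f$ with $f(y)=0$ (supplied, as in your proof, by James's characterization) is restricted to the two-dimensional subspace $Z=span\{x,y\}$, the restriction is written as a convex combination of at most two extreme points of $B_{Z^*}$ (Singer's Lemma 1.1, pp.~166), and each of these is then lifted to an extreme point of $B_{\mathbb{X}^*}$ by Singer's extension lemma (Lemma 1.2, pp.~168). Your argument stays global instead: you observe that $J(x)$ is a weak*-compact face of $B_{\mathbb{X}^*}$, apply Bauer's maximum principle to the weak*-continuous affine map $f\mapsto f(y)$ to obtain extreme points $\phi,\psi$ of $B_{\mathbb{X}^*}$ realizing the maximum and minimum of this map over $J(x)$, and finish with the intermediate value theorem. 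For the exact orthogonality statement your route is cleaner and avoids both of Singer's lemmas. What the subspace route buys is precisely what the paper needs for the approximate version: when one only knows $f(x)\geq\sqrt{1-\epsilon^2}\,\|x\|$, the relevant set of functionals is no longer a face of the dual ball, so a Bauer-type argument would not yield extreme points of $B_{\mathbb{X}^*}$; the restriction-and-lifting method survives because the two extreme functionals are only required to norm an auxiliary point $u\in span\{x,y\}$ rather than $x$ itself. Two trivial quibbles: the case $x=0$ is not vacuous (both sides of the equivalence hold --- take any extreme $\phi$, then $\psi=-\phi$ and $\alpha=\tfrac12$ work), and you verified the face property of $J(x)$ only for midpoints, though the argument for a general proper convex combination is identical.
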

  The following theorem generalizes this result.

\begin{theorem}\label{th.epsilond}
	Let $\mathbb{X}$ be a normed linear space. Let $x,y\in \mathbb{X}$ and $\epsilon \in [0,1).$ Then $x\perp_D^{\epsilon} y$ if and only if there exists $\phi,\psi \in Ext(B_{\mathbb{X}^*}),~u\in span\{x,y\}\cap S_{\mathbb{X}}$ and $t\in [0,1]$ such that $\phi(u)=\psi(u)=1,$ $(1-t)\phi(x)+t\psi(x)\geq \sqrt{1-\epsilon^2}\|x\|$ and $(1-t)\phi(y)+t\psi(y)=0.$
\end{theorem}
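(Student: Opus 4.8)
The plan is to reduce approximate orthogonality $\perp_D^{\epsilon}$ to exact Birkhoff--James orthogonality of a suitable element of $span\{x,y\}$, apply Singer's Theorem~\ref{th.singer}, and read off $\phi,\psi$. I assume $x\neq 0$ (the relation being degenerate otherwise), and observe that if $y=0$ the statement is immediate by taking $u=x/\|x\|$ and any extreme functional $\phi=\psi$ norming $u$. For the sufficiency (which in fact does not use $u$), set $f=(1-t)\phi+t\psi$. As a convex combination of elements of $B_{\mathbb{X}^*}$ it satisfies $\|f\|\le 1$, and since $(1-t)\phi(y)+t\psi(y)=0$ the functional $f$ is constant along the line $\lambda\mapsto x+\lambda y$. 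Hence for every $\lambda\in\mathbb{R}$,
\[
\|x+\lambda y\|\ge f(x+\lambda y)=(1-t)\phi(x)+t\psi(x)\ge\sqrt{1-\epsilon^2}\,\|x\|,
\]
which is precisely $x\perp_D^{\epsilon}y$.

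For the necessity, assume $x\perp_D^{\epsilon}y$ with $y\neq 0$. The map $\lambda\mapsto\|x+\lambda y\|$ is convex and coercive, so it attains its infimum at some $\lambda_0\in\mathbb{R}$. Put $z=x+\lambda_0 y\in span\{x,y\}$. Minimality yields $\|z+\mu y\|=\|x+(\lambda_0+\mu)y\|\ge\|z\|$ for all $\mu\in\mathbb{R}$, that is, $z\perp_B y$. Moreover $\|z\|=\inf_{\lambda}\|x+\lambda y\|\ge\sqrt{1-\epsilon^2}\,\|x\|>0$, the strict inequality coming from $x\neq 0$ and $\epsilon<1$; in particular $z\neq 0$, so $u:=z/\|z\|$ is a unit vector in $span\{x,y\}$.

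Now I apply Singer's Theorem~\ref{th.singer} to $z\perp_B y$, obtaining $\phi,\psi\in Ext(B_{\mathbb{X}^*})$ and $\alpha\in[0,1]$ with $\phi(z)=\psi(z)=\|z\|$ and $\alpha\phi(y)+(1-\alpha)\psi(y)=0$. Setting $t=1-\alpha$ gives at once $\phi(u)=\psi(u)=1$ and $(1-t)\phi(y)+t\psi(y)=0$. Finally, writing $x=z-\lambda_0 y$ and using this last identity to cancel the $\lambda_0$ term,
\[
(1-t)\phi(x)+t\psi(x)=\|z\|-\lambda_0\big[\alpha\phi(y)+(1-\alpha)\psi(y)\big]=\|z\|\ge\sqrt{1-\epsilon^2}\,\|x\|,
\]
which completes the verification. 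The only points requiring care are the attainment of the infimum, guaranteed by coercivity once $y\neq 0$, and the fact that $z\neq 0$; both hypotheses $x\neq 0$ and $\epsilon<1$ enter precisely here, while everything else is direct computation.
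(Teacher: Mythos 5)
Your proof is correct, and it takes a genuinely different route from the paper's. You reduce approximate orthogonality to exact orthogonality: the minimizer $z=x+\lambda_0 y$ of the coercive convex map $\lambda\mapsto\|x+\lambda y\|$ satisfies $z\perp_B y$ and $\|z\|\ge\sqrt{1-\epsilon^2}\|x\|$, so Theorem~\ref{th.singer} applied to the pair $(z,y)$ delivers $\phi,\psi,t$ at once, with $u=z/\|z\|$. The paper never forms a minimizer: it takes from \cite[Lemma 3.2]{MSP} a functional $f\in S_{\mathbb{X}^*}$ with $f(y)=0$ and $f(x)\ge\sqrt{1-\epsilon^2}\|x\|$, restricts it to $Z=span\{x,y\}$, decomposes the restriction into a convex combination of two extreme points of $B_{Z^*}$ and then extends these to extreme points of $B_{\mathbb{X}^*}$ (Singer's Lemmas 1.1 and 1.2); in other words, it re-runs in the approximate setting the machinery by which Theorem~\ref{th.singer} itself is proved, rather than quoting that theorem as a black box the way you do. Your route buys self-containedness (your sufficiency argument is the same one-line computation, done directly instead of citing the MSP lemma), and it quietly sidesteps a rough spot in the paper: the assertion that ``$f$ attains its norm in $Z$'' is not automatic for an arbitrary $f$ with the stated properties, since such an $f$ can have $\|f|_{Z}\|<1$; one needs either the specific distance functional coming from the Hahn--Banach construction or a renormalization of $f|_{Z}$, whereas your unit vector $u=z/\|z\|$ is exhibited explicitly, so the issue never arises. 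The price of your approach is that the infimum must be attained, which is why you must split off $y=0$ (coercivity fails there) --- and you do. Finally, your exclusion of $x=0$ is not mere tidiness: for $x=0\neq y$ the stated equivalence is genuinely false, because any $u\in span\{y\}\cap S_{\mathbb{X}}$ with $\phi(u)=\psi(u)=1$ forces $(1-t)\phi(y)+t\psi(y)=\pm\|y\|\neq 0$; the paper's proof tacitly carries the same assumption.
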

\begin{proof}
	At first we prove the easier sufficient part of the theorem. Let $f=(1-t)\phi+t\psi.$ Then $f(u)=1,~f(y)=0$ and $\|f\|=1.$ Now, $f(x)=(1-t)\phi(x)+t\psi(x)\geq \sqrt{1-\epsilon^2}\|x\|.$  Therefore, by \cite[Lemma 3.2]{MSP}, we have, $x\perp_D^{\epsilon}y.$ \\
	Now, we prove the necessary part of the theorem. Let $x\perp_D^{\epsilon}y.$ Let $Z=span\{x,y\}.$ Then from \cite[Lemma 3.2]{MSP}, there exists $f\in S_{\mathbb{X}^*}$ such that  $f(y)=0$ and $f(x)\geq \sqrt{1-\epsilon^2}\|x\|.$ It is easy to see that  $f$ attains its norm in $Z,$ i.e.,  there exists $u\in S_Z$ such that $f(u)=1.$ Now, let $g=f|_{Z}.$  Thus, $g\in S_{Z^*}.$ Therefore, by \cite[Lemma 1.1, pp 166]{S}, there exists $g_1,g_2\in Ext(B_{Z^*})$ and $t\in [0,1]$ such that $g=(1-t)g_1+tg_2.$ Now, $g(u)=1\Rightarrow g_1(u)=g_2(u)=1.$ Again, by \cite [Lemma 1.2, pp 168]{S}, there exists $\phi,\psi\in Ext(B_{\mathbb{X}^*})$ such that $\phi|_{Z}=g_1$ and $\psi|_Z=g_2.$ Clearly, $\phi(u)=g_1(u)=1$ and $\psi(u)=g_2(u)=1.$  Now,
	\begin{eqnarray*}
		(1-t)\phi(x)+t\psi(x)&=&(1-t)g_1(x)+tg_2(x)\\
		&=& g(x)\\
		&=& f(x)\\
		& \geq&\sqrt{1-\epsilon^2}\|x\|~and\\
		(1-t)\phi(y)+t\psi(y)&=&(1-t)g_1(y)+tg_2(y)\\  
		&=&g(y)=0.
	\end{eqnarray*}
	This completes the proof of the theorem.
\end{proof}

In \cite[Th. 2.1]{PSM}, the authors characterized approximate Birkhoff-James orthogonality $(\perp_D^{\epsilon})$ of compact linear operators defined on a reflexive Banach space. Using Theorem \ref{th.epsilond}, we now obtain a characterization of approximate Birkhoff-James orthogonality $(\perp_D^{\epsilon})$ of compact linear operators defined between arbitrary Banach spaces, not necessarily reflexive. For this we need the following theorem. 
\begin{theorem}\cite[Th. 1.3]{RS}\label{th.rs}
Let $\mathbb{X},\mathbb{Y}$ be Banach spaces. Then 
\[Ext(B_{\mathbb{K}(\mathbb{X},\mathbb{Y})^*})=\{x^{**}\otimes y^{*}\in \mathbb{K}(\mathbb{X},\mathbb{Y})^{*}:x^{**}\in Ext(B_{\mathbb{X}^{**}}), y^*\in Ext(B_{\mathbb{Y}^*})\},\]
where $(x^{**}\otimes y^{*})(T)=x^{**}(T^*y^*)$ for every $T\in \mathbb{K}(\mathbb{X},\mathbb{Y}).$
\end{theorem}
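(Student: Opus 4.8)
The plan is to realise $\mathbb{K}(\mathbb{X},\mathbb{Y})$ isometrically inside a space of continuous functions and then read off the extreme points of its dual ball from the completely understood extreme points of a $C(K)$-dual. Set $K=(B_{\mathbb{X}^{**}},w^*)\times(B_{\mathbb{Y}^*},w^*)$, a compact Hausdorff space, and define $\Phi\colon \mathbb{K}(\mathbb{X},\mathbb{Y})\to C(K)$ by $(\Phi T)(x^{**},y^*)=x^{**}(T^*y^*)$. The first task is to verify that $\Phi T$ is genuinely continuous: writing $(\Phi T)(x^{**},y^*)=y^*(T^{**}x^{**})$ and using that compactness of $T$ makes $T^{**}$ carry $B_{\mathbb{X}^{**}}$ into $\mathbb{Y}$ and act weak$^*$-to-norm continuously there, joint continuity follows. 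A short computation gives $\|\Phi T\|_\infty=\sup_{y^*}\|T^*y^*\|=\|T\|$, so $\Phi$ is an isometric embedding. Hence the adjoint $\Phi^*\colon C(K)^*\to\mathbb{K}(\mathbb{X},\mathbb{Y})^*$ is a weak$^*$-continuous quotient map carrying $B_{C(K)^*}$ onto $B_{\mathbb{K}(\mathbb{X},\mathbb{Y})^*}$, and on Dirac masses $\Phi^*(\delta_{(x^{**},y^*)})=x^{**}\otimes y^*$.

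For the inclusion ``$\subseteq$'', recall $Ext(B_{C(K)^*})=\{\pm\delta_k:k\in K\}$, so its image $\Sigma:=\Phi^*(Ext(B_{C(K)^*}))=\{x^{**}\otimes y^*:x^{**}\in B_{\mathbb{X}^{**}},\,y^*\in B_{\mathbb{Y}^*}\}$ is weak$^*$-compact, being the continuous image of a compact set. Applying $\Phi^*$ to the Krein--Milman representation $B_{C(K)^*}=\overline{\mathrm{conv}}^{\,w^*}(Ext(B_{C(K)^*}))$ and using surjectivity of $\Phi^*$ on balls yields $B_{\mathbb{K}(\mathbb{X},\mathbb{Y})^*}=\overline{\mathrm{conv}}^{\,w^*}(\Sigma)$, and Milman's theorem forces $Ext(B_{\mathbb{K}(\mathbb{X},\mathbb{Y})^*})\subseteq\Sigma$. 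Thus every extreme functional is already a pure tensor $x^{**}\otimes y^*$ with $\|x^{**}\|,\|y^*\|\le 1$. To see the factors are extreme and of norm one, I would test against rank-one operators: for $f\in\mathbb{X}^*$, $v\in\mathbb{Y}$ one computes $(x^{**}\otimes y^*)(f\otimes v)=x^{**}(f)\,y^*(v)$, which shows $a\otimes y^*\neq b\otimes y^*$ whenever $a\neq b$ (choose $f$ with $(a-b)(f)\neq 0$ and $v$ with $y^*(v)=1$). Hence a nontrivial splitting of $x^{**}$ in $B_{\mathbb{X}^{**}}$, or of $y^*$ in $B_{\mathbb{Y}^*}$, would split $x^{**}\otimes y^*$ nontrivially; extremality therefore propagates to both factors, and $\|x^{**}\|\,\|y^*\|=1$ gives $\|x^{**}\|=\|y^*\|=1$.

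The reverse inclusion ``$\supseteq$'' is where the real work lies. Fix $x_0^{**}\in Ext(B_{\mathbb{X}^{**}})$ and $y_0^*\in Ext(B_{\mathbb{Y}^*})$; I must show $p:=x_0^{**}\otimes y_0^*=\Phi^*(\delta_{k_0})$, with $k_0=(x_0^{**},y_0^*)$, is extreme. The strategy is to prove that the fibre $(\Phi^*)^{-1}(p)\cap B_{C(K)^*}$ equals the single point $\{\delta_{k_0}\}$. Granting this, any decomposition $p=\tfrac12(F+G)$ with $F,G\in B_{\mathbb{K}(\mathbb{X},\mathbb{Y})^*}$ lifts, by surjectivity of $\Phi^*$, to $F=\Phi^*\mu$, $G=\Phi^*\nu$ with $\mu,\nu\in B_{C(K)^*}$; then $\tfrac12(\mu+\nu)$ lies in the fibre, hence equals $\delta_{k_0}$, and since $\delta_{k_0}$ is extreme in $B_{C(K)^*}$ we get $\mu=\nu=\delta_{k_0}$ and $F=G=p$.

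Thus the crux reduces to a concentration statement: if $\mu\in B_{C(K)^*}$ satisfies $\int_K x^{**}(T^*y^*)\,d\mu=x_0^{**}(T^*y_0^*)$ for every $T\in\mathbb{K}(\mathbb{X},\mathbb{Y})$, then $\mu=\delta_{k_0}$. Testing on rank-one operators reduces this to $\int_K x^{**}(f)\,y^*(v)\,d\mu=x_0^{**}(f)\,y_0^*(v)$ for all $f\in\mathbb{X}^*$, $v\in\mathbb{Y}$. Choosing $f\in S_{\mathbb{X}^*}$ with $x_0^{**}(f)=1$ and $v\in B_{\mathbb{Y}}$ with $y_0^*(v)$ near $1$ should force $\|\mu\|=1$ and make $|\mu|$ concentrate on the weak$^*$-closed faces cut out by these peaking functionals; letting $f,v$ vary and invoking that $x_0^{**}$ and $y_0^*$, being extreme, cannot be split, one collapses the support of $\mu$ to $\{k_0\}$. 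I expect this passage --- from the approximate concentration supplied by rank-one tests to the exact identification $\mu=\delta_{k_0}$ --- to be the main obstacle, precisely because $y_0^*$ need not attain its norm on $\mathbb{Y}$ and because finite-rank operators need not be dense in $\mathbb{K}(\mathbb{X},\mathbb{Y})$; handling it cleanly is exactly where the extremality hypotheses on $x_0^{**}$ and $y_0^*$ must be used in full force.
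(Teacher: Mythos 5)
First, note that the paper does not prove this statement at all: Theorem~\ref{th.rs} is quoted verbatim from Ruess--Stegall \cite{RS}, so your proposal must be judged as an attempt to reprove that theorem from scratch. Half of it succeeds. The embedding $\Phi\colon\mathbb{K}(\mathbb{X},\mathbb{Y})\to C(K)$, $K=(B_{\mathbb{X}^{**}},w^*)\times(B_{\mathbb{Y}^*},w^*)$, is indeed isometric (compactness of $T$ gives $T^{**}(B_{\mathbb{X}^{**}})\subseteq\mathbb{Y}$ and $w^*$-to-norm continuity of $T^{**}$ on $B_{\mathbb{X}^{**}}$, whence joint continuity of $\Phi T$), and the chain Arens--Kelley $+$ Krein--Milman $+$ Milman correctly gives $Ext(B_{\mathbb{K}(\mathbb{X},\mathbb{Y})^*})\subseteq\{x^{**}\otimes y^*: x^{**}\in B_{\mathbb{X}^{**}},\,y^*\in B_{\mathbb{Y}^*}\}$; the rank-one tests then correctly force $\|x^{**}\|=\|y^*\|=1$ and propagate extremality to both factors. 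So the inclusion ``$\subseteq$'' is complete.

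The genuine gap is in ``$\supseteq$'', and it is twofold. First, your crux claim --- that the fibre $(\Phi^*)^{-1}(p)\cap B_{C(K)^*}$ is the singleton $\{\delta_{k_0}\}$ --- is false: since $\Phi^*(\delta_{(-x_0^{**},-y_0^*)})=(-x_0^{**})\otimes(-y_0^*)=p$ and $\Phi^*(-\delta_{(-x_0^{**},y_0^*)})=\Phi^*(-\delta_{(x_0^{**},-y_0^*)})=p$, the fibre contains the convex hull of at least four distinct measures, so the reduction as you formulated it cannot be carried out. It can be repaired: $p$ is extreme if and only if the fibre is a \emph{face} of $B_{C(K)^*}$, and the convex hull of those four Dirac-type measures is a face (a total-variation bookkeeping exercise); what one must then prove is that the fibre equals exactly that convex hull, i.e., every $\mu\in B_{C(K)^*}$ with $\int_K x^{**}(T^*y^*)\,d\mu=x_0^{**}(T^*y_0^*)$ for all compact $T$ is supported on $\{\pm x_0^{**}\}\times\{\pm y_0^*\}$ with matched signs. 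Second, this concentration statement --- which you yourself flag as ``the main obstacle'' and do not prove --- is precisely the hard content of the Ruess--Stegall theorem, and your sketch for it does not work: rank-one operators only control the moments $\int x^{**}(f)\,y^*(v)\,d\mu$, and your proposed peaking step already fails at the start, since an extreme point $x_0^{**}\in Ext(B_{\mathbb{X}^{**}})$ need not attain its norm on $B_{\mathbb{X}^*}$ (the same obstruction you note for $y_0^*$), so no $f\in S_{\mathbb{X}^*}$ with $x_0^{**}(f)=1$ need exist. Thus the proposal proves the easy inclusion but leaves the substantive direction of the theorem unproven, with the proposed route to it resting on a false intermediate claim.
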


\begin{theorem}\label{th.epsilondop}
	Let $\mathbb{X},\mathbb{Y}$ be Banach spaces. Let $T,A\in \mathbb{K}(\mathbb{X},\mathbb{Y})$ and $\epsilon\in[0,1).$ Then $T\perp_D^{\epsilon}A$ if and only if there exists $S\in span \{T,A\},$ $y_1^*,y_2^*\in Ext(B_{\mathbb{Y}^*}),$ $x_1^{**},x_2^{**}\in Ext(B_{\mathbb{X}^{**}})$ and $t\in [0,1]$ such that $\|S\|=1,$ $x_i^{**}(S^*y_i^*)=1$ for $i=1,2,$ $(1-t)x_1^{**}(T^*y_1^*)+tx_2^{**}(T^*y_2^*)\geq \sqrt{1-\epsilon^2}\|T\|$ and $(1-t)x_1^{**}(A^*y_1^*)+tx_2^{**}(A^*y_2^*)=0.$ 
\end{theorem}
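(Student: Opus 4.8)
The plan is to apply Theorem~\ref{th.epsilond} verbatim to the Banach space $\mathbb{K}(\mathbb{X},\mathbb{Y})$, with the roles of $x,y$ played by $T,A$, and then to translate the abstract extreme functionals it produces into the concrete tensor form $x^{**}\otimes y^*$ supplied by Theorem~\ref{th.rs}. Since both ingredients are already in hand, the entire argument is essentially a consistent change of notation carried out across the operators $S,T,A$.

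For the necessity, I would begin with $T\perp_D^\epsilon A$ viewed inside $\mathbb{K}(\mathbb{X},\mathbb{Y})$. Theorem~\ref{th.epsilond} then yields $\phi,\psi\in Ext(B_{\mathbb{K}(\mathbb{X},\mathbb{Y})^*})$, an element $S\in span\{T,A\}\cap S_{\mathbb{K}(\mathbb{X},\mathbb{Y})}$, and a scalar $t\in[0,1]$ with $\phi(S)=\psi(S)=1$, $(1-t)\phi(T)+t\psi(T)\geq\sqrt{1-\epsilon^2}\|T\|$, and $(1-t)\phi(A)+t\psi(A)=0$. By Theorem~\ref{th.rs} I may write $\phi=x_1^{**}\otimes y_1^*$ and $\psi=x_2^{**}\otimes y_2^*$ with $x_i^{**}\in Ext(B_{\mathbb{X}^{**}})$ and $y_i^*\in Ext(B_{\mathbb{Y}^*})$. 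Using the pairing $(x^{**}\otimes y^*)(R)=x^{**}(R^*y^*)$ for $R\in\{S,T,A\}$, the three displayed relations become exactly $x_i^{**}(S^*y_i^*)=1$, the inequality involving $T$, and the vanishing condition involving $A$, while $\|S\|=1$ simply records that $S\in S_{\mathbb{K}(\mathbb{X},\mathbb{Y})}$.

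The sufficiency is the same computation read backwards. Given $S$, the extreme points $x_i^{**},y_i^*$, and $t$ as in the statement, I would set $\phi=x_1^{**}\otimes y_1^*$ and $\psi=x_2^{**}\otimes y_2^*$; these belong to $Ext(B_{\mathbb{K}(\mathbb{X},\mathbb{Y})^*})$ by Theorem~\ref{th.rs}. The assumed identities then say precisely that $\phi,\psi$, the norm-one $S$, and $t$ satisfy the hypotheses of the sufficient direction of Theorem~\ref{th.epsilond}, whence $T\perp_D^\epsilon A$.

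I do not expect any genuine analytic obstacle here, since the result is a transport of Theorem~\ref{th.epsilond} through the extreme-point identification of Theorem~\ref{th.rs}. The only point demanding care is the bookkeeping: one must apply the pairing formula to each of $S$, $T$, and $A$ correctly, and verify that the norm-attaining element $u$ produced by Theorem~\ref{th.epsilond} is indeed the required norm-one $S$ in $span\{T,A\}$.
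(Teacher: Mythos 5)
Your proposal is correct and follows essentially the same route as the paper: apply Theorem~\ref{th.epsilond} inside the Banach space $\mathbb{K}(\mathbb{X},\mathbb{Y})$ and then use the Ruess--Stegall identification (Theorem~\ref{th.rs}) to pass between abstract extreme functionals and the tensors $x_i^{**}\otimes y_i^*$, in both directions. The bookkeeping you flag (applying the pairing $(x^{**}\otimes y^*)(R)=x^{**}(R^*y^*)$ to each of $S,T,A$ and recording $\|S\|=1$) is exactly what the paper's proof does.
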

\begin{proof}
	We first prove the necessary part of the theorem. Let $T\perp_D^{\epsilon}A.$ Then by Theorem \ref{th.epsilond}, there exists $S\in span \{T,A\},$ $\phi,\psi\in Ext(B_{\mathbb{K}(\mathbb{X},\mathbb{Y})^*})$ and $t\in [0,1]$ such that $\|S\|=1, \phi(S)=\psi(S)=1,$ $(1-t)\phi(T)+t\psi(T)\geq\sqrt{1-\epsilon^2}\|T\|$ and $(1-t)\phi(A)+t\psi(A)=0.$ By Theorem \ref{th.rs}, we obtain $\phi=x_1^{**}\otimes y_1^*$ and $\psi=x_2^{**}\otimes y_2^*,$ where $x_i^{**}\in Ext(B_{\mathbb{X}^{**}}),~y_i^*\in Ext(B_{\mathbb{Y}^*}),$ for $i=1,2.$ Therefore, $(1-t)(x_1^{**}\otimes y_1^*)(T)+t(x_2^{**}\otimes y_2^*)(T)\geq \sqrt{1-\epsilon^2}\|T\|$ and $(1-t)(x_1^{**}\otimes y_1^*)(A)+t(x_2^{**}\otimes y_2^*)(A)=0.$ Thus, $(1-t)x_1^{**}(T^* y_1^*)+tx_2^{**}(T^* y_2^*)\geq \sqrt{1-\epsilon^2}\|T\|$ and $(1-t)x_1^{**}(A^* y_1^*)+tx_2^{**}(A^* y_2^*)=0.$ Now, $\phi(S)=\psi(S)=1\Rightarrow (x_i^{**}\otimes y_i^*)(S)=1$ for $i=1,2,$ i.e., $x_i^{**}(S^* y_i^*)=1.$\\
	For the sufficient part, suppose that there exists $S\in span \{T,A\},$ $y_1^*,y_2^*\in Ext(B_{\mathbb{Y}^*}),$ $x_1^{**},x_2^{**}\in Ext(B_{\mathbb{X}^{**}})$ and $t\in [0,1]$ such that $\|S\|=1,$ $x_i^{**}(S^*y_i^*)=1$ for $i=1,2,$ $(1-t)x_1^{**}(T^*y_1^*)+tx_2^{**}(T^*y_2^*)\geq \sqrt{1-\epsilon^2}\|T\|$ and $(1-t)x_1^{**}(A^*y_1^*)+tx_2^{**}(A^*y_2^*)=0.$ Let $\phi_{i}=x_i^{**}\otimes y_i^*$ for $i=1,2.$ Then by Theorem \ref{th.rs}, $\phi_{i}\in Ext(B_{\mathbb{K}(\mathbb{X},\mathbb{Y})^*}),$ for $i=1,2.$ Now, $\phi_{i}(S)=(x_i^{**}\otimes y_i^*)(S)=1$ for $i=1,2,$ $(1-t)\phi_1(T)+t\phi_2(T)=(1-t)(x_1^{**}\otimes y_1^*)(T)+t(x_2^{**}\otimes y_2^*)(T)\geq \sqrt{1-\epsilon^2}\|T\|$ and $(1-t)\phi_1(A)+t\phi_2(A)=(1-t)(x_1^{**}\otimes y_1^*)(A)+t(x_2^{**}\otimes y_2^*)(A)=0.$  Thus, by Theorem \ref{th.epsilond}, $T\perp_D^{\epsilon}A.$
\end{proof}

In addition, if $\mathbb{X}$ is  reflexive then  following the same method as in Theorem \ref{th.epsilondop} and using the fact that $ Ext(B_{\mathbb{K}(\mathbb{X},\mathbb{Y})^*})=\{y^{*}\otimes x \in \mathbb{K}(\mathbb{X},\mathbb{Y})^{*}:y^{*}\in Ext(B_{\mathbb{Y}^{*}}), x\in Ext(B_{\mathbb{X}})\},$ where $ (y^* \otimes x) (T) = y^*(Tx),$ we can prove the following theorem.

\begin{theorem}\label{th.epsilondopr}
	Let $\mathbb{X}$ be a reflexive Banach space and $\mathbb{Y}$ be a Banach space. Let $T,A\in \mathbb{K}(\mathbb{X},\mathbb{Y})$ and $\epsilon\in[0,1).$ Then $T\perp_D^{\epsilon}A$ if and only if there exists $S\in span \{T,A\},$ $y_1^*,y_2^*\in Ext(B_{\mathbb{Y}^*}),~x_1,x_2\in Ext(B_{\mathbb{X}})$ and $t\in [0,1]$ such that $\|S\|=1,$ $y_i^{*}(Sx_i)=1$ for $i=1,2,$ $(1-t)y_1^{*}(Tx_1)+ty_2^{*}(Tx_2)\geq \sqrt{1-\epsilon^2}\|T\|$ and $(1-t)y_1^{*}(Ax_1)+ty_2^{*}(Ax_2)=0.$ 
\end{theorem}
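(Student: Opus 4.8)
The plan is to run exactly the argument of Theorem~\ref{th.epsilondop}, but applied through the reflexive description of $Ext(B_{\mathbb{K}(\mathbb{X},\mathbb{Y})^*})$ recorded above rather than through Theorem~\ref{th.rs}. The key simplification is that reflexivity of $\mathbb{X}$ gives the canonical identification $\mathbb{X}^{**}=\mathbb{X}$, under which $Ext(B_{\mathbb{X}^{**}})=Ext(B_{\mathbb{X}})$ and each functional $x^{**}$ is evaluation at a point $x\in Ext(B_{\mathbb{X}})$; consequently $x^{**}(B^*y^*)=(B^*y^*)(x)=y^*(Bx)$ for every $B\in\mathbb{K}(\mathbb{X},\mathbb{Y})$, so the tensor $x^{**}\otimes y^*$ becomes $y^*\otimes x$. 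Thus the whole proof amounts to substituting $y_i^*(Bx_i)$ for $x_i^{**}(B^*y_i^*)$ throughout the proof of Theorem~\ref{th.epsilondop}.

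For the necessity direction, I would assume $T\perp_D^{\epsilon}A$ and apply Theorem~\ref{th.epsilond} to the normed space $\mathbb{K}(\mathbb{X},\mathbb{Y})$ with the pair $(T,A)$ in place of $(x,y)$. This produces $S\in span\{T,A\}$ with $\|S\|=1$, functionals $\phi,\psi\in Ext(B_{\mathbb{K}(\mathbb{X},\mathbb{Y})^*})$, and $t\in[0,1]$ satisfying $\phi(S)=\psi(S)=1$, $(1-t)\phi(T)+t\psi(T)\geq\sqrt{1-\epsilon^2}\|T\|$ and $(1-t)\phi(A)+t\psi(A)=0$. Using the reflexive extreme-point formula I would write $\phi=y_1^*\otimes x_1$ and $\psi=y_2^*\otimes x_2$ with $y_i^*\in Ext(B_{\mathbb{Y}^*})$ and $x_i\in Ext(B_{\mathbb{X}})$, and then evaluate $\phi(B)=y_1^*(Bx_1)$, $\psi(B)=y_2^*(Bx_2)$ for $B\in\{S,T,A\}$. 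Reading off the three conditions in terms of these evaluations gives exactly $y_i^*(Sx_i)=1$ for $i=1,2$, together with $(1-t)y_1^*(Tx_1)+ty_2^*(Tx_2)\geq\sqrt{1-\epsilon^2}\|T\|$ and $(1-t)y_1^*(Ax_1)+ty_2^*(Ax_2)=0$, as required.

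For sufficiency I would reverse this. Given $S$, $y_1^*,y_2^*$, $x_1,x_2$ and $t$ as in the statement, set $\phi_i=y_i^*\otimes x_i$ for $i=1,2$; by the reflexive extreme-point description these lie in $Ext(B_{\mathbb{K}(\mathbb{X},\mathbb{Y})^*})$. Rewriting each stated hypothesis via $\phi_i(B)=y_i^*(Bx_i)$ shows that $\phi_i(S)=1$, that $(1-t)\phi_1(T)+t\phi_2(T)\geq\sqrt{1-\epsilon^2}\|T\|$ and that $(1-t)\phi_1(A)+t\phi_2(A)=0$, so the hypotheses of Theorem~\ref{th.epsilond} hold for the space $\mathbb{K}(\mathbb{X},\mathbb{Y})$ and we conclude $T\perp_D^{\epsilon}A$.

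The argument carries no genuine obstacle: it is a verbatim translation of Theorem~\ref{th.epsilondop} once one has the correct description of the extreme points of $B_{\mathbb{K}(\mathbb{X},\mathbb{Y})^*}$ under reflexivity. The only point meriting a line of justification is the identity $x^{**}(B^*y^*)=y^*(Bx)$ under the canonical embedding $\mathbb{X}\hookrightarrow\mathbb{X}^{**}$, which confirms that the reflexive tensor $y^*\otimes x$ is literally the object $x^{**}\otimes y^*$ of Theorem~\ref{th.rs} specialized to reflexive $\mathbb{X}$; with this in hand the equivalence follows mechanically.
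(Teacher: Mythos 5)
Your proposal is correct and follows exactly the route the paper intends: the paper itself gives no separate proof of this theorem, stating only that one should repeat the argument of Theorem~\ref{th.epsilondop} using the reflexive description $Ext(B_{\mathbb{K}(\mathbb{X},\mathbb{Y})^*})=\{y^*\otimes x : y^*\in Ext(B_{\mathbb{Y}^*}),\ x\in Ext(B_{\mathbb{X}})\}$, which is precisely your substitution of $y_i^*(Bx_i)$ for $x_i^{**}(B^*y_i^*)$. Your additional remark justifying $x^{**}(B^*y^*)=y^*(Bx)$ under the canonical embedding is a worthwhile clarification of the identification the paper leaves implicit.
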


In the following theorem, we obtain a characterization of approximate Birkhoff-James orthogonality ($\perp_B^{\epsilon}$) of bounded linear operators under some additional condition on the norm attainment set of the operator.
\begin{theorem}\label{th.lideal}
	Let $\mathbb{X},\mathbb{Y}$ be normed linear spaces. Let $T\in \mathbb{L}(\mathbb{X},\mathbb{Y})$ be such that $M_T=\{\pm x_0\},$ for some $x_0\in S_{\mathbb{X}}.$ Let span$\{x_0\}$ be an $L-$ideal of $\mathbb{X}$ and $P$ be the $L-$projection with range$(P)=$ span$\{x_0\}$. Suppose that $\|T\|_{ker(P)}<\|T\|.$ Then for any $A\in \mathbb{L}(\mathbb{X},\mathbb{Y})$ and $\epsilon\in [0,1),$ $T\perp_B^{\epsilon}A$ if and only if $\|Tx_0+\lambda Ax_0\|^2\geq \|Tx_0\|^2-2\epsilon\|Tx_0\|\|\lambda A\|$ for all $\lambda \in \mathbb{R}.$\\
	Moreover, if $x_0\in M_A,$ then $T\perp_B^{\epsilon}A$ if and only if $Tx_0\perp_B^{\epsilon}Ax_0.$
\end{theorem}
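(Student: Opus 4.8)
The plan is to treat the two directions of the main equivalence separately, since one is immediate while the other rests on a structural description of $\|T+\lambda A\|$ furnished by the $L$-projection.

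\textbf{Sufficiency (the easy direction).} Suppose the pointwise inequality $\|Tx_0+\lambda Ax_0\|^2\ge \|Tx_0\|^2-2\epsilon\|Tx_0\|\|\lambda A\|$ holds for all $\lambda$. Since $\|T+\lambda A\|\ge\|(T+\lambda A)x_0\|=\|Tx_0+\lambda Ax_0\|$ and $\|Tx_0\|=\|T\|$ (because $x_0\in M_T$), squaring gives $\|T+\lambda A\|^2\ge\|T\|^2-2\epsilon\|T\|\|\lambda A\|$, i.e. $T\perp_B^\epsilon A$. This uses none of the hypotheses on $M_T$ or $P$.

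\textbf{Necessity (where the hypotheses enter).} Writing $Px=f(x)x_0$ with $f(x_0)=1$ and using the $L$-projection identity $\|x\|=|f(x)|+\|x-Px\|$, I would first establish the factorization
\[
\|T+\lambda A\|=\max\{\|(T+\lambda A)x_0\|,\ \|T+\lambda A\|_{\ker(P)}\}
\]
for every $\lambda$: the upper bound holds because, for $\|x\|=1$, the relation $|f(x)|+\|x-Px\|=1$ exhibits $\|(T+\lambda A)x\|$ as a convex combination of the two quantities on the right, and the lower bound is clear by restricting to $x_0$ and to $\ker(P)$. Next, the estimates $\|(T+\lambda A)x_0\|\ge\|T\|-|\lambda|\|A\|$ and $\|T+\lambda A\|_{\ker(P)}\le\|T\|_{\ker(P)}+|\lambda|\|A\|$, together with the strict gap $\|T\|_{\ker(P)}<\|T\|$, force the maximum to be attained by the first term for small $\lambda$; concretely, for $|\lambda|\le\delta:=(\|T\|-\|T\|_{\ker(P)})/(2\|A\|)$ one obtains $\|T+\lambda A\|=\|Tx_0+\lambda Ax_0\|$. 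Feeding this into the hypothesis $T\perp_B^\epsilon A$ then yields the desired pointwise inequality for all $|\lambda|\le\delta$.

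\textbf{The main obstacle} is that $T\perp_B^\epsilon A$ delivers information for all $\lambda$ only through the operator norm, whereas the localization above is valid solely for small $\lambda$; I still need the pointwise inequality for every real $\lambda$. I expect to close this gap by convexity. Set $F(\lambda)=\|Tx_0+\lambda Ax_0\|^2-\|Tx_0\|^2+2\epsilon\|Tx_0\|\|\lambda A\|$. The map $\lambda\mapsto\|Tx_0+\lambda Ax_0\|^2$ is convex, and $\lambda\mapsto 2\epsilon\|Tx_0\|\|A\|\,|\lambda|$ is convex, so $F$ is convex; moreover $F(0)=0$ and $F\ge0$ on the neighbourhood $[-\delta,\delta]$ of $0$. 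Hence $0$ is a local minimizer of the convex function $F$, so it is a global minimizer, giving $F(\lambda)\ge F(0)=0$ for all $\lambda$ — exactly the pointwise inequality. (The trivial case $A=0$ is handled directly, as both conditions then hold vacuously.)

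\textbf{The ``moreover'' part} follows at once: if $x_0\in M_A$ then $\|Ax_0\|=\|A\|$, so $\|\lambda A\|=\|\lambda Ax_0\|$ and the pointwise inequality becomes precisely $Tx_0\perp_B^\epsilon Ax_0$. Combined with the equivalence just proved, this gives $T\perp_B^\epsilon A\Leftrightarrow Tx_0\perp_B^\epsilon Ax_0$.
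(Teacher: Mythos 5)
Your proof is correct, and the necessity direction takes a genuinely different route from the paper's. The paper argues sequentially: it first shows (using the decomposition $x_n=a_nx_0+y_n$ with $\|x_n\|=|a_n|+\|y_n\|$ and the gap $\|T\|_{\ker(P)}<\|T\|$) that every norming sequence for $T$ has a subsequence converging to $\pm x_0$, and then invokes the external characterization of $\perp_B^{\epsilon}$ from \cite[Th.~3.4]{PSM} --- either a norming sequence $\{x_n\}$ with $\lim\|Ax_n\|\leq\epsilon\|A\|$, or two norming sequences satisfying one-sided inequalities with vanishing errors --- and passes to the limit along the convergent subsequences. You instead prove the exact local formula $\|T+\lambda A\|=\max\{\|(T+\lambda A)x_0\|,\ \|T+\lambda A\|_{\ker(P)}\}$ (which is just the statement that $\mathbb{X}$ is the $\ell_1$-sum of $\mathrm{span}\{x_0\}$ and $\ker(P)$), deduce $\|T+\lambda A\|=\|Tx_0+\lambda Ax_0\|$ for $|\lambda|\leq\delta$, and then globalize the resulting local inequality via the fact that a local minimizer of the convex function $F$ is global; all of these steps check out, including the endpoint case $|\lambda|=\delta$ and the separate treatment of $A=0$. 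What each approach buys: yours is entirely self-contained (no appeal to \cite{PSM}), avoids sequence extraction, and in fact only uses $x_0\in M_T$ together with the $L$-ideal structure and the gap condition --- not the full hypothesis $M_T=\{\pm x_0\}$ --- so it establishes a formally stronger statement; the paper's argument is shorter given the cited machinery, and its clustering claim for norming sequences is a reusable structural fact (it is what makes the ``moreover'' part and Corollary~\ref{cor.lideal} transparent in their framework). One cosmetic remark: your sufficiency step does use $x_0\in M_T$ (to get $\|Tx_0\|=\|T\|$), so the aside that it ``uses none of the hypotheses on $M_T$'' should be softened; this does not affect correctness.
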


\begin{proof}
	The sufficient part of the proof is trivial. We only prove the necessary part. Let $T\perp_B^{\epsilon}A$ and $\|T\|_{ker(P)}=m<\|T\|.$ We claim that if $\{x_n\}$ is any norming sequence for $T,$ then $\{x_n\}$ has a subsequence converging to $ax_0,$ where $|a|=1.$ Clearly, for each $n\in \mathbb{N},$ $x_n=a_nx_0+y_n,$ where $a_n\in \mathbb{R},$ $y_n\in ker(P)$ and $1=\|x_n\|=|a_n|+\|y_n\|.$ Since $\{a_n\}$ is a bounded sequence of real numbers, it has a convergent subsequence $\{a_{n_k}\}$ converging to $a,$ say. Clearly, $|a|\leq 1.$ Now, $\|T(\frac{y_n}{\|y_n\|})\|\leq \|T\|_{ker(P)}=m\Rightarrow \|Ty_n\|\leq m\|y_n\|.$ Therefore,
	\begin{eqnarray*}
	\|Tx_{n_k}\| &=& \|a_{n_k}Tx_0+Ty_{n_k}\|\\
	         &\leq & |a_{n_k}|\|Tx_0\|+\|Ty_{n_k}\|\\
	         &\leq& |a_{n_k}|\|T\|+m\|y_{n_k}\|\\
	         &\leq &|a_{n_k}|\|T\|+m(1-|a_{n_k}|)\\
	 \Rightarrow \|Tx_{n_k}\|-m &\leq & |a_{n_k}|(\|T\|-m)\\
	 \Rightarrow \|T\|-m &\leq & |a| (\|T\|-m),~[taking~limit~k\to \infty]\\
	 \Rightarrow 1&\leq & |a|       
	\end{eqnarray*}
Thus, $a_{n_k}\to a,$ where $|a|=1.$ Hence, $y_{n_k}\to 0$ and $x_{n_k}\to ax_0,$ where $|a|=1.$ This proves our claim. Since $T\perp_B^{\epsilon}A,$ from \cite[Th. 3.4]{PSM}, we have, either $(a)$ or $(b)$ holds.\\
$(a)$ There exists a norming sequence $\{x_n\}$ for $T$ such that $\lim_{n\to \infty}\|Ax_n\|\leq \epsilon \|A\|.$ \\
$(b)$ There exists two norming sequences $\{x_n\},\{y_n\}$ for $T$  and two sequences of positive real numbers $\{\epsilon_n\},\{\delta_n\}$ such that\\
$(i)$ $\epsilon_n\to 0,~\delta_n\to 0$\\
$(ii)$ $\|Tx_n+\lambda Ax_n\|^2\geq(1-\epsilon_n^2)\|Tx_n\|^2-2\epsilon\sqrt{1-\epsilon_n^2}\|Tx_n\|\|\lambda A\|$ for all $\lambda \geq 0.$\\
$(iii)$ $\|Ty_n+\lambda Ay_n\|^2\geq(1-\delta_n^2)\|Ty_n\|^2-2\epsilon\sqrt{1-\delta_n^2}\|Ty_n\|\|\lambda A\|$ for all $\lambda \leq 0.$\\
First suppose that $(a)$ holds. Then there exists a subsequence $\{x_{n_k}\}$ of $\{x_n\}$ converging to $ax_0,$ where $|a|=1.$ Therefore, $\|Ax_0\|=\lim_{k\to \infty}\|Ax_{n_k}\|\leq \epsilon \|A\|.$ Thus, for all $\lambda \in \mathbb{R},$ $\|Tx_0+\lambda Ax_0\|^2\geq (\|Tx_0\|-|\lambda|\|Ax_0\|)^2\geq\|Tx_0\|^2-2\|Tx_0\||\lambda|\|Ax_0\|\geq \|Tx_0\|^2-2\epsilon\|Tx_0\|\|\lambda A\|.$\\
Now, suppose that $(b)$ holds. Then there exists subsequences $\{x_{n_k}\}$ and $\{y_{n_k}\}$ of $\{x_n\}$ and $\{y_n\}$ respectively, such that $x_{n_k}\to ax_0$ and $y_{n_k}\to bx_0,$ where $|a|=|b|=1.$ Now, in $(ii)$ and $(iii)$ of $(b),$ taking limit we obtain,  $\|Tx_0+\lambda Ax_0\|^2\geq \|Tx_0\|^2-2\epsilon\|Tx_0\|\|\lambda A\|$ for all $\lambda \in \mathbb{R}.$\\
Clearly, if $x_0\in M_A,$ then $T\perp_B^{\epsilon}A$ if and only if $Tx_0\perp_B^{\epsilon}Ax_0.$  This completes the proof of the theorem.
\end{proof}

\begin{remark}
	Note that, if $\mathbb{X}$ is an $L^1-$predual space, then for every $x^*\in Ext(B_{\mathbb{X}^*}),$ span$\{x^*\}$ is an $L-$ideal of $\mathbb{X}^*.$ Now, if $T\in \mathbb{L}(\mathbb{X},\mathbb{X})$ is such that $M_{T^*}=\{\pm x^*\}$ and $\|T^*\|_{ker(P)}<\|T^*\|,$ where $P$ is the $L-$projection with range$(P)=$ span$\{x^*\},$ then $T^*$ satisfies the hypothesis of Theorem \ref{th.lideal}.
\end{remark}

Using Theorem \ref{th.lideal}, we obtain the following characterization of Birkhoff-James orthogonality of bounded linear operators.

\begin{cor}\label{cor.lideal}
	Let $\mathbb{X},\mathbb{Y}$ be normed linear spaces. Let $T\in \mathbb{L}(\mathbb{X},\mathbb{Y})$ be such that $M_T=\{\pm x_0\},$ for some $x_0\in S_{\mathbb{X}}.$  Let span$\{x_0\}$ be an $L-$ideal of $\mathbb{X}$ and $P$ be the $L-$projection with range$(P)=$ span$\{x_0\}$. Suppose that $\|T\|_{ker(P)}<\|T\|.$ Then for any $A\in \mathbb{L}(\mathbb{X},\mathbb{Y}),$ $T\perp_B A$ if and only if $Tx_0\perp_B Ax_0.$
\end{cor}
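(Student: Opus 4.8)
The plan is to obtain Corollary \ref{cor.lideal} as the special case $\epsilon = 0$ of Theorem \ref{th.lideal}, but with the observation that when $\epsilon = 0$ the approximate orthogonality $\perp_B^{\epsilon}$ reduces exactly to ordinary Birkhoff-James orthogonality $\perp_B$. Indeed, setting $\epsilon = 0$ in the defining inequality $\|x + \lambda y\|^2 \geq \|x\|^2 - 2\epsilon\|x\|\|\lambda y\|$ gives $\|x + \lambda y\|^2 \geq \|x\|^2$ for all $\lambda \in \mathbb{R}$, which is precisely $x \perp_B y$. So the strategy is simply to specialize the already-proved theorem.

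First I would invoke Theorem \ref{th.lideal} with $\epsilon = 0$. The hypotheses match verbatim: $M_T = \{\pm x_0\}$, the span of $x_0$ is an $L$-ideal with $L$-projection $P$, and $\|T\|_{\ker(P)} < \|T\|$. The conclusion of the theorem, under $\epsilon = 0$, states that $T \perp_B A$ if and only if $\|Tx_0 + \lambda A x_0\|^2 \geq \|Tx_0\|^2 - 2 \cdot 0 \cdot \|Tx_0\|\|\lambda A\| = \|Tx_0\|^2$ for all $\lambda \in \mathbb{R}$. The right-hand inequality is exactly the statement $Tx_0 \perp_B Ax_0$. Thus the main biconditional of the corollary follows immediately, and I do not even need the ``Moreover'' clause about $x_0 \in M_A$, since the first equivalence of the theorem already yields the cleaner orthogonality statement directly.

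I anticipate essentially no obstacle here, since this is a direct specialization; the only point requiring a line of justification is the identification $\perp_B^{0} = \perp_B$, which is transparent from the definitions given in the introduction. A fully rigorous writeup would just verify that the $\epsilon = 0$ instance of the inequality in Theorem \ref{th.lideal} collapses to the Birkhoff-James condition on both sides and then quote the theorem. If one preferred a self-contained argument avoiding the appeal to $\perp_B^{\epsilon}$ machinery, the alternative would be to rerun the norming-sequence argument from the proof of Theorem \ref{th.lideal}: show that any norming sequence for $T$ has a subsequence converging to $ax_0$ with $|a| = 1$ (using $\|T\|_{\ker(P)} < \|T\|$), and then combine this with the classical operator-orthogonality characterization (James-type result) that $T \perp_B A$ forces the existence of a norming sequence $\{x_n\}$ for $T$ along which $Tx_n \perp_B^{\,\text{approx}} Ax_n$ in the limit. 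But the clean route is the direct reduction, so that is the one I would present.

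\begin{proof}
	Taking $\epsilon = 0$ in the definition of $\perp_B^{\epsilon}$, the condition $\|x + \lambda y\|^2 \geq \|x\|^2 - 2\epsilon\|x\|\|\lambda y\|$ for all $\lambda \in \mathbb{R}$ becomes $\|x + \lambda y\|^2 \geq \|x\|^2$ for all $\lambda \in \mathbb{R}$, which is precisely $x \perp_B y$. Hence $\perp_B^{0}$ coincides with $\perp_B$. Applying Theorem \ref{th.lideal} with $\epsilon = 0$, we obtain that $T \perp_B A$ if and only if $\|Tx_0 + \lambda A x_0\|^2 \geq \|Tx_0\|^2$ for all $\lambda \in \mathbb{R}$, and the latter is exactly $Tx_0 \perp_B Ax_0$. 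This completes the proof.
\end{proof}
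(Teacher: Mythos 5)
Your proof is correct and is exactly the route the paper intends: the paper states the corollary as an immediate consequence of Theorem \ref{th.lideal} (giving no separate proof), and your specialization to $\epsilon=0$, with the observation that the term $2\epsilon\|Tx_0\|\|\lambda A\|$ vanishes so that no hypothesis $x_0\in M_A$ is needed, is precisely that reduction.
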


In \cite[Th. 2.1]{SPM} and \cite[Th. 2.1]{PSG}, the authors characterized Birkhoff-James orthogonality of compact linear operators when the domain is reflexive. In the next theorem, we obtain analogous results for bounded linear operators under some restriction on the spaces.

\begin{theorem}\label{th.midealortho}
	(i) Let $\mathbb{X}$ be a reflexive Banach space and $\mathbb{Y}$ be a Banach space. Suppose that $\mathbb{K}(\mathbb{X},\mathbb{Y})$ is an $M-$ideal in $\mathbb{L}(\mathbb{X},\mathbb{Y}).$ Let $T\in \mathbb{L}(\mathbb{X},\mathbb{Y})$ be such that $\|T\|=1$ and $dist(T,\mathbb{K}(\mathbb{X},\mathbb{Y}))<1.$ Then for any $A\in \mathbb{L}(\mathbb{X},\mathbb{Y}),~T\perp_B A$ if and only if there exists $x_1,x_2\in M_T \cap Ext(B_{\mathbb{X}})$ such that $Ax_1\in (Tx_1)^+$ and $Ax_2\in (Tx_2)^-.$\\
	(ii) In addition, if $M_T=D\cup(-D),$ where $D$ is a closed connected subset of $S_{\mathbb{X}},$ then $T\perp_B A$ if and only if there exists $x\in M_T$ such that $Tx\perp_B Ax.$
\end{theorem}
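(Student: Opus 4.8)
The plan is to prove Theorem~\ref{th.midealortho} by leveraging the $M$-ideal structure to reduce the problem of Birkhoff-James orthogonality of bounded operators to the behavior of $T$ and $A$ on the norm attainment set. Since $\mathbb{K}(\mathbb{X},\mathbb{Y})$ is an $M$-ideal in $\mathbb{L}(\mathbb{X},\mathbb{Y})$ and $dist(T,\mathbb{K}(\mathbb{X},\mathbb{Y}))<\|T\|=1$, a standard property of $M$-ideals (the norm attainment of the quotient being strictly smaller) should guarantee that every norming sequence for $T$ admits a subsequence along which the relevant functionals concentrate on elements of $M_T$. Concretely, I would first establish that $M_T$ is nonempty and, crucially, that any norming sequence has a convergent subsequence whose limit lies in $M_T\cap Ext(B_{\mathbb{X}})$. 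Reflexivity of $\mathbb{X}$ gives weak compactness of $B_{\mathbb{X}}$, and the gap condition $dist(T,\mathbb{K}(\mathbb{X},\mathbb{Y}))<1$ prevents norming sequences from escaping to the ``compact-defect'' part, forcing genuine norm attainment.

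For part (i), the forward implication proceeds by assuming $T\perp_B A$ and producing the two extreme points. The key tool is Singer's theorem (Theorem~\ref{th.singer}) applied in $\mathbb{L}(\mathbb{X},\mathbb{Y})$: there exist extreme functionals $\phi,\psi\in Ext(B_{\mathbb{L}(\mathbb{X},\mathbb{Y})^*})$ and $\alpha\in[0,1]$ with $\phi(T)=\psi(T)=1$ and $\alpha\phi(A)+(1-\alpha)\psi(A)=0$. The $M$-ideal hypothesis together with the gap condition should let me identify these extreme functionals with evaluation functionals of the form $y^*\otimes x$ for $x\in M_T\cap Ext(B_{\mathbb{X}})$; this is where the analogue of Theorem~\ref{th.rs} for the $M$-ideal setting enters, using that $dist(T,\mathbb{K})<1$ forces the norming functionals to act through the compact part. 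The conditions $\phi(T)=1$ then translate to $y_i^*(Tx_i)=\|Tx_i\|=1$ with $y_i^*\in J(Tx_i)$, and splitting the orthogonality condition $\alpha\phi(A)+(1-\alpha)\psi(A)=0$ into the $\lambda\geq 0$ and $\lambda\leq 0$ halves yields $Ax_1\in(Tx_1)^+$ and $Ax_2\in(Tx_2)^-$. The converse is the easier direction: given such $x_1,x_2$, one directly verifies $\|T+\lambda A\|\geq\|(T+\lambda A)x_i\|\geq 1$ by choosing $i=1$ for $\lambda\geq 0$ and $i=2$ for $\lambda\leq 0$, using the definitions of $(Tx_i)^{\pm}$.

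For part (ii), the connectedness of $D$ is the crucial extra ingredient. Given part (i), I have $x_1,x_2\in M_T\cap Ext(B_{\mathbb{X}})$ with $Ax_1\in(Tx_1)^+$ and $Ax_2\in(Tx_2)^-$. I would define, for each $x\in D$, the real-valued quantity measuring whether $Ax\in(Tx)^+$ versus $(Tx)^-$ --- for instance via the one-sided derivatives $\lim_{\lambda\to 0^+}(\|Tx+\lambda Ax\|-\|Tx\|)/\lambda$ and its left-sided counterpart --- and argue that these depend continuously on $x\in D$. The points $x_1,x_2$ (or their reflections landing in $D$) then sit at opposite ``signs,'' so by connectedness of $D$ an intermediate-value argument produces a single $x\in D$ at which both one-sided conditions hold simultaneously, which is exactly $Ax\in(Tx)^+\cap(Tx)^-$, i.e., $Tx\perp_B Ax$. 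The main obstacle I anticipate is the first step: rigorously extracting the extreme norm-attaining points $x_1,x_2$ and justifying that the extreme functionals of $\mathbb{L}(\mathbb{X},\mathbb{Y})^*$ relevant to $T$ factor through point evaluations on $M_T$. This hinges delicately on the $M$-ideal decomposition $\mathbb{L}(\mathbb{X},\mathbb{Y})^*=\mathbb{K}(\mathbb{X},\mathbb{Y})^*\oplus_1 \mathbb{K}(\mathbb{X},\mathbb{Y})^{\perp}$ and on showing the gap condition confines the norming data to the first summand, so the continuity/intermediate-value machinery in part (ii) will require careful verification that the relevant selection of functionals varies continuously over the connected set $D$.
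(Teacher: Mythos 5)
Your part (i) is essentially the paper's own argument. After applying Singer's theorem to get $\phi,\psi\in Ext(B_{\mathbb{L}(\mathbb{X},\mathbb{Y})^*})\cap J(T)$ with $\alpha\phi(A)+(1-\alpha)\psi(A)=0$, the paper simply cites \cite[Lemma 3.1]{W} to identify $\phi=y_1^*\otimes x_1$, $\psi=y_2^*\otimes x_2$ with $x_i\in M_T\cap Ext(B_{\mathbb{X}})$ and $y_i^*\in Ext(J(Tx_i))$, and then splits the sign condition exactly as you describe. What you sketch instead --- the decomposition $\mathbb{L}(\mathbb{X},\mathbb{Y})^*=\mathbb{K}(\mathbb{X},\mathbb{Y})^*\oplus_1\mathbb{K}(\mathbb{X},\mathbb{Y})^0$, the observation that any $\phi$ in the ball of the annihilator satisfies $\phi(T)\leq dist(T,\mathbb{K}(\mathbb{X},\mathbb{Y}))<1$ so that norming functionals must lie in the $\mathbb{K}^*$ summand, then Theorem \ref{th.rs} plus reflexivity --- is precisely the proof of that cited lemma, so this direction is sound. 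Two caveats: to conclude that the extreme functional equals the evaluation $y^*\otimes x$ on all of $\mathbb{L}(\mathbb{X},\mathbb{Y})$ (not merely on the compacts) you need the unique Hahn--Banach extension property enjoyed by functionals in the summand complementary to an $M$-ideal's annihilator; and your preliminary claim that every norming sequence has a norm-convergent subsequence with limit in $M_T\cap Ext(B_{\mathbb{X}})$ is both unjustified (a limit of a norming sequence has no reason to be an extreme point) and unnecessary, since nothing in the argument uses it.

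The genuine gap is in part (ii). Your intermediate-value argument rests on the claim that the one-sided derivatives $x\mapsto\lim_{\lambda\to0^+}(\|Tx+\lambda Ax\|-\|Tx\|)/\lambda$ and its left-hand analogue are continuous on $D$. This is false in general: by convexity the right-hand derivative equals $\inf_{\lambda>0}(\|Tx+\lambda Ax\|-\|Tx\|)/\lambda$, an infimum of continuous functions of $x$, hence only upper semicontinuous, and the left-hand derivative is correspondingly only lower semicontinuous; they can genuinely jump at points where the norm fails to be smooth at $Tx$ in the direction $Ax$, and such points cannot be excluded. So the intermediate value theorem cannot be invoked as stated. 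The repair, which is what the paper does, avoids derivatives altogether: set $W_1=\{x\in D:Ax\in(Tx)^+\}$ and $W_2=\{x\in D:Ax\in(Tx)^-\}$. Each is closed, being an intersection over $\lambda$ of the closed sets $\{x:\|Tx+\lambda Ax\|\geq\|Tx\|\}$; each is nonempty by part (i) after replacing $x_i$ by $-x_i$ if necessary (legitimate since $\|T(-x)+\lambda A(-x)\|=\|Tx+\lambda Ax\|$, as you noted); and $W_1\cup W_2=D$ by \cite[Prop. 2.1 (i)]{Sa}. A connected set cannot be the union of two nonempty disjoint closed sets, so $W_1\cap W_2\neq\emptyset$, and any $x\in W_1\cap W_2$ satisfies $Tx\perp_B Ax$ by \cite[Prop. 2.1 (ii)]{Sa}. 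Alternatively, your argument can be salvaged with semicontinuity in place of continuity: if $W_1\cap W_2=\emptyset$, then $D$ is the disjoint union of the sets where the right derivative is negative (open, by upper semicontinuity) and where the left derivative is positive (open, by lower semicontinuity), both nonempty, contradicting connectedness.
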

\begin{proof}
    $(i)$ The sufficient part of the theorem is trivial. We only prove the necessary part of the theorem. Let $T\perp_B A.$ Then by Theorem \ref{th.singer}, there exists $\phi, \psi \in Ext(B_{{\mathbb{L}(\mathbb{X},\mathbb{Y})}^*})\cap J(T)$ and $\alpha \in [0,1]$ such that $\alpha \phi(A)+(1-\alpha)\psi(A)=0.$ Clearly, $\phi,\psi \in Ext(J(T)).$ Therefore, by \cite[Lemma 3.1]{W}, there exists $x_i \in  M_T \cap Ext(B_{\mathbb{X}})$ and $y_i^*\in Ext(J(Tx_i))$ for $i=1,2$ such that $\phi=y_1^*\otimes x_1$ and $\psi=y_2^*\otimes x_2.$ Now, 
    \begin{eqnarray*}
    \alpha \phi(A)+(1-\alpha)\psi(A) &=& 0\\
    \Rightarrow \alpha y_1^*\otimes x_1(A)+(1-\alpha)y_2^*\otimes x_2(A)&=&0\\
    \Rightarrow \alpha y_1^*(Ax_1)+(1-\alpha)y_2^*(Ax_2)&=&0
    \end{eqnarray*} 
	Therefore, without loss of generality we may assume that $y_1^*(Ax_1)\geq 0$ and $y_2^*(Ax_2)\leq 0.$ Thus, for any $\lambda \geq 0,$ $\|Tx_1+\lambda Ax_1\|\geq |y_1^*(Tx_1+\lambda Ax_1)|\geq \|Tx_1\|,$ since $y_1^*\in Ext(J(Tx_1)).$ Hence, $Ax_1\in (Tx_1)^+.$ Similarly, $y_2^*(Ax_2)\leq 0$ and $y_2^*\in Ext(J(Tx_2))$ gives that $Ax_2\in (Tx_2)^-.$ \\
	$(ii)$ The sufficient part of the theorem is trivial. We only prove the necessary part. Let $T\perp_B A$ and $M_T=D\cup(-D),$ where $D$ is a closed connected subset of $S_{\mathbb{X}}.$ Consider the sets 
	\[W_1=\{x\in D:Ax\in (Tx)^+\},\]
	\[W_2=\{x\in D:Ax\in (Tx)^-\}.\]
	Then by $(i),$ $W_1\neq \emptyset,~W_2\neq\emptyset.$ It is easy to check that $W_1$ and $W_2$ are closed. Applying \cite[Prop. 2.1 (i)]{Sa}, we obtain $D=W_1\cup W_2.$ Since $D$ is connected, $W_1\cap W_2\neq \emptyset.$ Let $x\in W_1\cap W_2.$ Then by \cite[Prop. 2.1 (ii)]{Sa}, $Tx\perp_B Ax.$ This completes the proof of the theorem.
\end{proof}

We next give a characterization of ``$T\perp_B^{\epsilon} A$'' under the condition that  $\mathbb{K}(\mathbb{X},\mathbb{Y})$ is an $M-$ideal in $\mathbb{L}(\mathbb{X},\mathbb{Y})$ and $dist(T,\mathbb{K}(\mathbb{X},\mathbb{Y}))<\|T\|.$

\begin{theorem}
	(i) Let $\mathbb{X}$ be a reflexive Banach space and $\mathbb{Y}$ be a Banach space. Suppose that $\mathbb{K}(\mathbb{X},\mathbb{Y})$ is an $M-$ideal in $\mathbb{L}(\mathbb{X},\mathbb{Y}).$ Let $T\in \mathbb{L}(\mathbb{X},\mathbb{Y})$ be such that $\|T\|=1$ and $dist(T,\mathbb{K}(\mathbb{X},\mathbb{Y}))<1.$ Then for any $A\in \mathbb{L}(\mathbb{X},\mathbb{Y})$ and $\epsilon\in[0,1),$ $T\perp_B^{\epsilon} A$ if and only if there exists $x_1,x_2\in M_T \cap Ext(B_{\mathbb{X}})$ such that 
	\[\|Tx_1+\lambda Ax_1\|^2\geq \|T\|^2-2\epsilon \|T\|\|\lambda A\|~\forall~\lambda~\geq~0~and\]
	\[\|Tx_2+\lambda Ax_2\|^2\geq \|T\|^2-2\epsilon \|T\|\|\lambda A\|~\forall~\lambda~\leq~0.\]
	(ii) In addition, if $M_T=D\cup(-D),$ where $D$ is a closed connected subset of $S_{\mathbb{X}},$ then $T\perp_B^{\epsilon} A$ if and only if there exists $x\in M_T$ such that $\|Tx+\lambda Ax\|^2\geq \|T\|^2-2\epsilon \|T\|\|\lambda A\|$ for all $\lambda\in \mathbb{R}.$
\end{theorem}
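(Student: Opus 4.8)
The plan is to follow the architecture of the proof of Theorem~\ref{th.midealortho}, with the exact conditions $Ax_i\in(Tx_i)^{\pm}$ replaced by their quadratic approximate analogues and Singer's theorem replaced by a one-sided (directional) analysis of $\perp_B^{\epsilon}$. Sufficiency in both (i) and (ii) is immediate: if $x_1,x_2\in M_T$ satisfy the two displayed inequalities, then using $\|Tx_i\|=\|T\|$ together with $\|T+\lambda A\|\ge\|Tx_i+\lambda Ax_i\|$, and testing with $x_1$ for $\lambda\ge 0$ and with $x_2$ for $\lambda\le 0$, gives $\|T+\lambda A\|^2\ge\|T\|^2-2\epsilon\|T\|\|\lambda A\|$ for all $\lambda$, i.e. $T\perp_B^{\epsilon}A$; part (ii) is the special case $x_1=x_2=x$.

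For the necessity in (i) I would split the relation into its two one-sided halves. Restricting $T\perp_B^{\epsilon}A$ to $\lambda\ge 0$, the function $g(\lambda)=\|T+\lambda A\|^2-\|T\|^2+2\epsilon\|T\|\lambda\|A\|$ is convex with $g(0)=0$, so its nonnegativity on $[0,\infty)$ forces $g'(0^+)\ge 0$; by the standard formula for the one-sided derivative of the norm, $g'(0^+)=2\|T\|\max_{\Phi\in J(T)}\Phi(A)+2\epsilon\|T\|\|A\|$, whence $\max_{\Phi\in J(T)}\Phi(A)\ge-\epsilon\|A\|$. Since $J(T)$ is weak$^*$-compact and convex and $\Phi\mapsto\Phi(A)$ is weak$^*$-continuous and affine, by Bauer's maximum principle this maximum is attained at an extreme point $\phi$ of $J(T)$. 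Here the hypotheses enter: by \cite[Lemma 3.1]{W} (reflexivity of $\mathbb{X}$, the $M$-ideal property, and $dist(T,\mathbb{K}(\mathbb{X},\mathbb{Y}))<\|T\|$) one has $\phi=y_1^*\otimes x_1$ with $x_1\in M_T\cap Ext(B_{\mathbb{X}})$ and $y_1^*\in Ext(J(Tx_1))$, so that $y_1^*(Ax_1)=\phi(A)\ge-\epsilon\|A\|$ and $y_1^*(Tx_1)=\|T\|$. For $\lambda\ge 0$ this gives $\|Tx_1+\lambda Ax_1\|\ge y_1^*(Tx_1+\lambda Ax_1)=\|T\|+\lambda y_1^*(Ax_1)\ge\|T\|-\epsilon\|\lambda A\|$, and squaring (the target inequality being vacuous when the right-hand side is negative) yields the first displayed inequality. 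Running the same argument on $\lambda\le 0$ (equivalently, applying the above to $-A$) produces $\min_{\Phi\in J(T)}\Phi(A)\le\epsilon\|A\|$ and, via Bauer and \cite[Lemma 3.1]{W}, an $x_2\in M_T\cap Ext(B_{\mathbb{X}})$ with $y_2^*(Ax_2)\le\epsilon\|A\|$, from which the second inequality follows symmetrically.

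For (ii) I would reproduce the connectedness argument of Theorem~\ref{th.midealortho}(ii). Put $W_1=\{x\in D:\|Tx+\lambda Ax\|^2\ge\|T\|^2-2\epsilon\|T\|\|\lambda A\|\ \forall\,\lambda\ge 0\}$ and let $W_2$ be the analogous set with $\lambda\le 0$. Each $W_i$ is an intersection over $\lambda$ of sets cut out by continuous functions of $x$, hence closed; both are nonempty by (i) together with the invariance of the conditions under $x\mapsto-x$ (which also lets one place the points in $D$). The covering $D=W_1\cup W_2$ comes from convexity: for $x\in D$ one has $\|Tx\|=\|T\|$, so $g(\lambda)=\|Tx+\lambda Ax\|^2-\|T\|^2+2\epsilon\|T\|\|\lambda A\|$ is convex with $g(0)=0$, and $g\ge 0$ on $[0,\infty)$ iff $g'(0^+)\ge 0$ while $g\ge 0$ on $(-\infty,0]$ iff $g'(0^-)\le 0$; since $g'(0^-)\le g'(0^+)$, at least one holds. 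As $D$ is connected and covered by the two nonempty closed sets $W_1,W_2$, we obtain $W_1\cap W_2\ne\emptyset$, and any $x$ in the intersection satisfies the inequality for all $\lambda$, as required.

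The main obstacle is precisely the necessity in (i). In the exact case Singer's theorem hands over two extreme functionals with a convex combination annihilating $A$; no such identity is available for the quadratic relation $\perp_B^{\epsilon}$, so the approximate condition must first be linearised into a statement about a single supporting functional, namely $\max_{\Phi\in J(T)}\Phi(A)\ge-\epsilon\|A\|$, and only then can one pass---by Bauer's principle and the $M$-ideal/reflexivity structure of \cite[Lemma 3.1]{W}---to an extreme functional of the concrete form $y^*\otimes x$ with $x\in M_T\cap Ext(B_{\mathbb{X}})$ that still certifies the one-sided orthogonality. Tracking the sign of $\|T\|-\epsilon\|\lambda A\|$ when squaring, and exploiting the $x\mapsto-x$ symmetry to seat $x_1,x_2$ in $D$ for part (ii), are the remaining routine points.
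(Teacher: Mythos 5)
Your proof is correct, but it takes a genuinely different route from the paper's in the key step. The paper proves necessity in (i) by first invoking \cite[Th. 2.2]{CSW} to replace the approximate relation by an exact one: $T\perp_B^{\epsilon}A$ yields an operator $S\in span\{T,A\}$ with $T\perp_B S$ and $\|S-A\|\leq\epsilon\|A\|$; it then applies the exact-orthogonality result (Theorem~\ref{th.midealortho}, itself resting on Singer's theorem plus \cite[Lemma 3.1]{W}) to $S$, and finally transfers the conclusion from $S$ back to $A$ by a perturbation estimate on $\|Tx_i+\lambda Ax_i\|$. You instead work directly with the approximate relation: you linearise it through the one-sided derivative formula $\lim_{\lambda\to 0^+}\frac{\|T+\lambda A\|-\|T\|}{\lambda}=\max_{\Phi\in J(T)}\Phi(A)$, obtain $\max_{\Phi\in J(T)}\Phi(A)\geq-\epsilon\|A\|$, produce an extreme point of $J(T)$ attaining the maximum via Bauer's maximum principle, and only then invoke \cite[Lemma 3.1]{W} to write it as $y_1^*\otimes x_1$. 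This bypasses both \cite[Th. 2.2]{CSW} and Singer's theorem; note that your application of \cite[Lemma 3.1]{W} to a point of $Ext(J(T))$ is exactly the way the paper itself reads that lemma (it passes from $Ext(B_{\mathbb{L}(\mathbb{X},\mathbb{Y})^*})\cap J(T)$ into $Ext(J(T))$ before applying it), so no extra hypothesis is smuggled in. For (ii) the paper again rides on the CSW reduction together with Theorem~\ref{th.midealortho}(ii), whereas you redo the connectedness argument ($D=W_1\cup W_2$ with $W_1,W_2$ closed and nonempty) at the approximate level, using convexity of $\lambda\mapsto\|Tx+\lambda Ax\|^2-\|T\|^2+2\epsilon\|T\|\|\lambda A\|$ and the inequality $g'(0^-)\leq g'(0^+)$ in place of \cite[Prop. 2.1 (i)]{Sa}. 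What the paper's route buys is brevity, since all the hard work is delegated to existing results; what your route buys is self-containment (no appeal to the approximate-to-exact reduction of \cite{CSW}), plus the slightly sharper intermediate information that the one-sided orthogonality certificates can be taken to be maximizing/minimizing functionals for $\Phi\mapsto\Phi(A)$ over $J(T)$. Your handling of the squaring step (treating separately the case $\|T\|-\epsilon\|\lambda A\|<0$, where the target inequality is vacuous) and of the $x\mapsto -x$ symmetry needed to seat the points in $D$ is also sound.
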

\begin{proof}
$(i)$ The sufficient part of the theorem is trivial. We only prove the necessary part of the theorem. Let $T\perp_B^{\epsilon} A.$ Then by \cite[Th. 2.2]{CSW}, there exists $S\in span\{T,A\}$ such that $T\perp_B S$ and $\|S-A\|\leq \epsilon \|A\|.$ Since $T\perp_B S,$ by Theorem \ref{th.midealortho}, there exists   $x_1,x_2\in M_T \cap Ext(B_{\mathbb{X}})$ such that $Sx_1\in (Tx_1)^+$ and $Sx_2\in (Tx_2)^-.$ Now, for $i=1,2,$ $\|Sx_i-Ax_i\|\leq \|S-A\|\leq \epsilon \|A\|.$ Suppose that $\lambda \geq 0.$ If $\|Tx_1\|-2\epsilon\|\lambda A\|<0,$ then $\|T\|^2-2\epsilon\|T\|\|\lambda A\|=\|Tx_1\|^2-2\epsilon\|Tx_1\|\|\lambda A\|<0\leq\|Tx_1+\lambda Ax_1\|^2.$ If $\|Tx_1\|-2\epsilon\|\lambda A\|\geq 0,$ then 
\begin{eqnarray*}
\|Tx_1+\lambda Ax_1\|^2 &=& \|Tx_1+\lambda Sx_1-\lambda Sx_1+\lambda Ax_1\|^2\\
&\geq & (\|Tx_1+\lambda Sx_1\|-|\lambda|\| Sx_1- Ax_1\|)^2\\
&\geq & \|Tx_1+\lambda Sx_1\|^2-2\|Tx_1+\lambda Sx_1\||\lambda|\| Sx_1- Ax_1\|\\
&\geq & \|Tx_1+\lambda Sx_1\|^2-2\|Tx_1+\lambda Sx_1\||\lambda|\epsilon\| A\|\\
&\geq &  \|Tx_1+\lambda Sx_1\| ( \|Tx_1+\lambda Sx_1\|-2\epsilon\|\lambda A\|)\\
&\geq& \|Tx_1\|(\|Tx_1\|-2\epsilon\|\lambda A\|)~[Since~Sx_1\in (Tx_1)^+]\\
&=&\|T\|^2-2\epsilon\|T\|\|\lambda A\|.
\end{eqnarray*} 
Thus, we get, $\|Tx_1+\lambda Ax_1\|^2\geq \|T\|^2-2\epsilon \|T\|\|\lambda A\|$ for all $\lambda \geq 0.$
Similarly,  using $Sx_2\in (Tx_2)^-,$ it can be proved that $\|Tx_2+\lambda Ax_2\|^2\geq \|T\|^2-2\epsilon \|T\|\|\lambda A\|$ for all $\lambda \leq 0.$ \\
$(ii)$ The proof follows easily from Theorem \ref{th.midealortho} and the method adopted in part $(i)$ of this theorem.
\end{proof}

\section{Applications.}
As an application of results obtained in the previous section we study smoothness of bounded linear operators defined between arbitrary normed linear spaces. Recently, many authors \cite{PSG,SPMR,R,Ra} have studied smoothness of bounded linear operators. We first obtain a characterization of smoothness of bounded linear operator $T$  when $M_T=\{\pm x_0\}$ and span$\{x_0\}$ is an $L-$ideal of $\mathbb{X}.$

\begin{theorem}
	Let $\mathbb{X},\mathbb{Y}$ be normed linear spaces. Let $T\in \mathbb{L}(\mathbb{X},\mathbb{Y})$ be such that $M_T=\{\pm x_0\},$ for some $x_0\in S_{\mathbb{X}}.$  Let span$\{x_0\}$ be an $L-$ideal of $\mathbb{X}.$ Then $T$ is smooth if and only if the following conditions hold.\\
	(i) $Tx_0$ is a smooth point in $\mathbb{Y}.$\\
	(ii) $\|T\|_{ker(P)}<\|T\|,$ where $P$ is the $L-$projection with range$(P)=$ span$\{x_0\}$.
\end{theorem}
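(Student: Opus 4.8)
The plan is to exploit the orthogonality machinery developed above, reducing smoothness of $T$ to smoothness of the single vector $Tx_0$ together with the condition that the norm of $T$ is not asymptotically attained on $\ker(P)$. Throughout I normalize $\|T\|=1$ and write $Px=\phi(x)x_0$, where $\phi\in\mathbb{X}^*$ is determined by $\phi(x_0)=1$ and $\ker\phi=\ker(P)$; since $P$ is an $L$-projection, $\|x\|=|\phi(x)|+\|x-\phi(x)x_0\|$, so $\|\phi\|=1$. For $g\in\mathbb{Y}^*$ I use the functional $g\otimes x_0\in\mathbb{L}(\mathbb{X},\mathbb{Y})^*$ defined by $(g\otimes x_0)(A)=g(Ax_0)$, which has norm at most $\|g\|$. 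Recall the standard facts that a nonzero $z$ is smooth iff $J(z)$ is a singleton and (James) that $Tx_0\perp_B z$ iff some $g\in J(Tx_0)$ kills $z$; I shall use these freely together with Corollary~\ref{cor.lideal}.

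For the sufficiency of (i) and (ii), I would argue as follows. Condition (ii) is precisely the hypothesis of Corollary~\ref{cor.lideal}, so for every $A$ one has $T\perp_B A$ iff $Tx_0\perp_B Ax_0$. Condition (i) says $J(Tx_0)=\{g_0\}$ for a unique $g_0\in S_{\mathbb{Y}^*}$, whence $Tx_0\perp_B z$ iff $g_0(z)=0$. Combining these, $T\perp_B A$ iff $(g_0\otimes x_0)(A)=0$; that is, the orthogonality set $\{A:T\perp_B A\}$ equals $\ker(g_0\otimes x_0)$. Since $(g_0\otimes x_0)(T)=g_0(Tx_0)=1=\|T\|$, the functional $\Phi_0:=g_0\otimes x_0$ lies in $J(T)$. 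Now any $h\in J(T)$ has $\ker h\subseteq\{A:T\perp_B A\}=\ker\Phi_0$ (as $h(A)=0$ forces $\|T+\lambda A\|\ge h(T)=\|T\|$), so $\Phi_0$ is a scalar multiple of $h$; evaluating at $T$ gives $\Phi_0=h$. Hence $J(T)=\{\Phi_0\}$ and $T$ is smooth.

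For the necessity I would prove the contrapositive of each condition by producing two distinct members of $J(T)$. If $Tx_0$ is not smooth, choose $g_1\ne g_2$ in $J(Tx_0)$ and pick $y\in\mathbb{Y}$ with $g_1(y)\ne g_2(y)$; the rank-one operator $A=\phi(\cdot)y$ satisfies $Ax_0=y$, so $(g_i\otimes x_0)(A)=g_i(y)$ separates the two supporting functionals $g_1\otimes x_0,\,g_2\otimes x_0\in J(T)$, contradicting smoothness. This gives (i). Assuming (i), suppose (ii) fails, i.e. $\|T\|_{\ker(P)}=1$; choose a norming sequence $z_n\in S_{\ker(P)}$ with $\|Tz_n\|\to1$ and pick $h_n\in S_{\mathbb{Y}^*}$ with $h_n(Tz_n)=\|Tz_n\|$. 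The functionals $\Psi_n:=h_n\otimes z_n$ lie in $B_{\mathbb{L}(\mathbb{X},\mathbb{Y})^*}$ and satisfy $\Psi_n(T)\to1$, so by Banach--Alaoglu they have a weak-$*$ cluster point $\Psi$; since evaluation at $T$ is weak-$*$ continuous, $\Psi(T)=1$ and thus $\Psi\in J(T)$. Taking $y$ with $g_0(y)\ne0$ and again $A=\phi(\cdot)y$, each $\Psi_n(A)=h_n(\phi(z_n)y)=0$ because $z_n\in\ker\phi$, whereas $\Phi_0(A)=g_0(y)\ne0$; hence $\Psi\ne\Phi_0$, two elements of $J(T)$, contradicting smoothness. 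Therefore (ii) holds.

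I expect the main obstacle to be the necessity of (ii): one must manufacture a second supporting functional that genuinely sees $\ker(P)$, and the only handle available is a norming sequence inside $\ker(P)$ that need not converge. The compactness passage to a weak-$*$ cluster point and, crucially, the choice of the separating operator $A=\phi(\cdot)y$ on which $\Psi$ and $\Phi_0$ disagree are exactly where the $L$-ideal structure enters, through the identity $\ker\phi=\ker(P)$; confirming $\Psi(T)=\|T\|$ for a cluster point of a sequence whose scalar values converge is the routine but delicate point to state carefully.
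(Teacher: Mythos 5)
Your proof is correct, but it takes a genuinely different route from the paper's. The paper disposes of this theorem in one line, by citing Corollary \ref{cor.lideal}, the external smoothness characterizations \cite[Th. 3.3]{SPMR} and \cite[Th. 4.4]{PSG}, and the fact that $x_0\perp_B \ker(P)$; in effect both directions are outsourced to those earlier theorems, with the $L$-decomposition $\|ax_0+y\|=|a|+\|y\|$ used only to verify their hypotheses (for instance, that $S_{\ker(P)}$ stays uniformly far from $\pm x_0$). You invoke Corollary \ref{cor.lideal} only for sufficiency, and even there you argue differently from the paper's additivity-flavoured reasoning (compare Theorem \ref{th.smoothsufficient}, which rests on James' right-additivity criterion \cite[Th. 4.2]{J}): you identify $J(T)$ outright as the singleton $\{g_0\otimes x_0\}$ via the kernel-inclusion fact that $\ker h\subseteq\ker\Phi_0$ forces proportionality of functionals, which has the added benefit of exhibiting the unique supporting functional explicitly. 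For necessity, where the paper simply quotes \cite[Th. 3.3]{SPMR}, you build two distinct elements of $J(T)$ by hand: the rank-one functionals $g_1\otimes x_0\neq g_2\otimes x_0$ when $Tx_0$ is not smooth, and, when (ii) fails, a weak-$*$ cluster point $\Psi$ of $h_n\otimes z_n$ (Banach--Alaoglu, with $z_n\in S_{\ker(P)}$ a norming sequence for $T|_{\ker(P)}$) separated from $\Phi_0=g_0\otimes x_0$ by the operator $A=\phi(\cdot)y$ --- exactly the point where the $L$-structure, through $\ker\phi=\ker(P)$, enters. The delicate steps all check out: $\|\phi\|=1$ follows from the $L$-identity, the evaluations $\Psi(T)=1$ and $\Psi(A)=0$ pass correctly to the cluster point because point evaluations are weak-$*$ continuous, and $\Phi_0(A)=g_0(y)\neq 0$ gives the separation. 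What your argument buys is self-containedness --- no appeal to the external characterizations, and validity in incomplete normed spaces, since Banach--Alaoglu needs only the dual of a normed space; what the paper's citation-based proof buys is brevity, the quoted theorems already encoding the norming-sequence mechanism that your cluster-point construction re-derives in this special case.
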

\begin{proof}
The proof follows easily from Corollary \ref{cor.lideal}, \cite[Th. 3.3]{SPMR}, \cite[Th. 4.4]{PSG} and the fact that $x_0\perp_B ker(P).$
\end{proof}
In \cite{GY}, Grz\c a\' slewicz and Younis characterized smooth points of $\mathbb{L}(\ell^p,E),$ where $1<p<\infty,$ $E$ is a Banach space and $\mathbb{K}(\ell^p,E)$ is an M-ideal in $\mathbb{L}(\ell^p,E).$ In the following theorem, we generalize the necessary part of the Theorem \cite[Th. 1]{GY}, the proof of which follows easily from \cite[Th. 3.3]{SPMR}.

\begin{theorem}\label{th.smoothnecessary}
	Let $\mathbb{X},\mathbb{Y}$ be normed linear spaces. Let $T\in \mathbb{L}(\mathbb{X},\mathbb{Y})$ be such that $M_T \neq \emptyset.$ Suppose that $T$ is smooth. Then the following conditions hold.\\
	(i) $M_T=\{\pm x_0\}$ for some $x_0\in S_{\mathbb{X}}.$\\
	(ii) $Tx_0$ is smooth point in $\mathbb{Y}$.\\
	(iii) $dist(T,\mathbb{K}(\mathbb{X},\mathbb{Y}))<\|T\|.$ 
\end{theorem}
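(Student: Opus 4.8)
The plan is to exploit the elementary but decisive fact that each point of $M_T$, together with a supporting functional of its image, produces a supporting functional of $T$. Note first that $T\ne 0$, since $T$ is a smooth point. The first step is to record the following observation: if $x\in M_T$ and $g\in J(Tx)\subseteq S_{\mathbb{Y}^*}$ (such $g$ exists by Hahn--Banach, since $\|Tx\|=\|T\|>0$), then the functional $g\otimes x$ on $\mathbb{L}(\mathbb{X},\mathbb{Y})$ defined by $(g\otimes x)(A)=g(Ax)$ lies in $J(T)$. Indeed $|(g\otimes x)(A)|\le \|g\|\,\|Ax\|\le \|A\|$ gives $\|g\otimes x\|\le 1$, while $(g\otimes x)(T)=g(Tx)=\|T\|$ forces $\|g\otimes x\|=1$ and hence membership in $J(T)$. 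Since $T$ is smooth, $J(T)$ is a singleton, say $J(T)=\{f_0\}$, so every such $g\otimes x$ must equal $f_0$.

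For (i), I would take $x_1,x_2\in M_T$ and functionals $g_i\in J(Tx_i)$, so that $g_1\otimes x_1=f_0=g_2\otimes x_2$, i.e. $g_1(Ax_1)=g_2(Ax_2)$ for every $A\in\mathbb{L}(\mathbb{X},\mathbb{Y})$. If $x_1,x_2$ were linearly independent, choose $\phi\in\mathbb{X}^*$ with $\phi(x_1)=1$ and $\phi(x_2)=0$, and let $A\in\mathbb{L}(\mathbb{X},\mathbb{Y})$ be given by $Az=\phi(z)\,y$ for an arbitrary fixed $y\in\mathbb{Y}$; then $g_1(y)=g_1(Ax_1)=g_2(Ax_2)=g_2(0)=0$ for all $y$, contradicting $\|g_1\|=1$. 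Hence any two elements of $M_T$ are parallel, and since $M_T\subseteq S_{\mathbb{X}}$ and $-x_0\in M_T$ whenever $x_0\in M_T$, we obtain $M_T=\{\pm x_0\}$. For (ii), I would run the same device with two functionals $g_1,g_2\in J(Tx_0)$, giving $g_1(Ax_0)=g_2(Ax_0)$ for all $A$; since every $y\in\mathbb{Y}$ equals $Ax_0$ for a suitable $A$ (take $\phi\in\mathbb{X}^*$ with $\phi(x_0)=1$ and $Az=\phi(z)\,y$), we conclude $g_1=g_2$, so $J(Tx_0)$ is a singleton and $Tx_0$ is smooth.

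For (iii), the idea is a duality argument. Writing $\mathbb{K}(\mathbb{X},\mathbb{Y})^{0}$ for the annihilator of $\mathbb{K}(\mathbb{X},\mathbb{Y})$ in $\mathbb{L}(\mathbb{X},\mathbb{Y})^*$, the standard description of the distance to a closed subspace (via the quotient map and Hahn--Banach) shows that $dist(T,\mathbb{K}(\mathbb{X},\mathbb{Y}))$ equals the maximum of $|\Lambda(T)|$ over $\Lambda\in\mathbb{K}(\mathbb{X},\mathbb{Y})^{0}$ with $\|\Lambda\|\le 1$, and that this maximum is attained. If $dist(T,\mathbb{K}(\mathbb{X},\mathbb{Y}))=\|T\|$, then after a sign change there is $\Lambda\in\mathbb{K}(\mathbb{X},\mathbb{Y})^{0}$ with $\|\Lambda\|=1$ and $\Lambda(T)=\|T\|$, i.e. $\Lambda\in J(T)$. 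On the other hand, fixing $x_0\in M_T$ and $g_0\in J(Tx_0)$, the functional $f_0=g_0\otimes x_0\in J(T)$ does not annihilate $\mathbb{K}(\mathbb{X},\mathbb{Y})$: choosing $\phi\in\mathbb{X}^*$ with $\phi(x_0)=1$ and $y_0\in\mathbb{Y}$ with $g_0(y_0)\ne 0$, the rank-one compact operator $K$ given by $Kz=\phi(z)\,y_0$ satisfies $f_0(K)=g_0(y_0)\ne 0$. Hence $\Lambda\ne f_0$, so $J(T)$ contains two distinct functionals, contradicting smoothness. Since $0\in\mathbb{K}(\mathbb{X},\mathbb{Y})$ always gives $dist(T,\mathbb{K}(\mathbb{X},\mathbb{Y}))\le\|T\|$, this forces the strict inequality in (iii).

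The routine parts, namely the norm computation for $g\otimes x$ and the construction of the auxiliary operators $z\mapsto\phi(z)\,y$, are immediate. The step that carries the real content, and where I would be most careful, is (iii): one must invoke the correct duality formula for the distance to $\mathbb{K}(\mathbb{X},\mathbb{Y})$ and then verify that the ``rank-one'' supporting functional $g_0\otimes x_0$ genuinely fails to annihilate the compact operators, so that equality of the distance with $\|T\|$ would manufacture a second element of $J(T)$ and break smoothness.
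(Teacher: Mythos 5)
Your proof is correct, but it follows a genuinely different route from the paper. The paper does not argue directly at all: it obtains Theorem~\ref{th.smoothnecessary} as an immediate consequence of the complete characterization of operator smoothness in \cite[Th.~3.3]{SPMR} (which says that $T$ with $M_T\neq\emptyset$ is smooth iff $M_T=\{\pm x_0\}$, $Tx_0$ is smooth, and $\sup_{y\in C}\|Ty\|<\|T\|$ for every closed $C\subseteq S_{\mathbb{X}}$ bounded away from $\pm x_0$), so the burden of proof is outsourced to that machinery. You instead give a self-contained argument: the observation that every $x\in M_T$ and $g\in J(Tx)$ produce a supporting functional $g\otimes x\in J(T)$, combined with rank-one operators $z\mapsto\phi(z)y$, yields (i) and (ii) by the classical tensor-functional argument, and for (iii) you use the duality formula $dist(T,\mathbb{K}(\mathbb{X},\mathbb{Y}))=\max\{|\Lambda(T)|:\Lambda\in\mathbb{K}(\mathbb{X},\mathbb{Y})^{0},\ \|\Lambda\|\le 1\}$ (with the maximum attained) to show that equality $dist(T,\mathbb{K}(\mathbb{X},\mathbb{Y}))=\|T\|$ would place in $J(T)$ an annihilator of the compacts, which is necessarily distinct from $g_0\otimes x_0$ since the latter does not kill rank-one operators --- contradicting that $J(T)$ is a singleton. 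Your approach buys elementarity and self-containedness: it uses only Hahn--Banach and rank-one operators, works verbatim for incomplete normed spaces (note only that $\mathbb{K}(\mathbb{X},\mathbb{Y})$ need not be closed when $\mathbb{Y}$ is incomplete, but the duality formula is unaffected by passing to the closure since distance and annihilator are the same), and isolates the real content of (iii) as the statement that $J(T)$ cannot meet $\mathbb{K}(\mathbb{X},\mathbb{Y})^{0}$. The paper's route is a one-line deduction on the page, but only because the heavy lifting is hidden in the cited characterization; deriving (iii) from that characterization still requires connecting the ``norm gap away from $\pm x_0$'' condition to the distance to compact operators, which your duality argument replaces outright.
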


Grz\c a\' slewicz and Younis \cite[Lemma 1]{GY} proved that the converse of Theorem \ref{th.smoothnecessary} is also true if we additionally assume that $\mathbb{X}$ is a reflexive Banach space and  $\mathbb{K}(\mathbb{X},\mathbb{Y})$ is an $M-$ideal in $\mathbb{L}(\mathbb{X},\mathbb{Y}).$ Using Theorem \ref{th.midealortho}, we give an alternative proof of \cite[Lemma 1]{GY}.

\begin{theorem}\label{th.smoothsufficient}
	Let $\mathbb{X}$ be a reflexive Banach space and $\mathbb{Y}$ be a Banach space. Let $\mathbb{K}(\mathbb{X},\mathbb{Y})$ be an $M-$ideal in $\mathbb{L}(\mathbb{X},\mathbb{Y}).$ Let $T\in \mathbb{L}(\mathbb{X},\mathbb{Y})$ be such that the following conditions hold.\\
	(i) $M_T=\{\pm x_0\}$ for some $x_0\in S_{\mathbb{X}}.$\\
	(ii) $Tx_0$ is smooth point in $\mathbb{Y}$.\\
	(iii) $dist(T,\mathbb{K}(\mathbb{X},\mathbb{Y}))<\|T\|.$ \\
	Then $T$ is smooth.
\end{theorem}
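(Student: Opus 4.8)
The plan is to combine the operator orthogonality characterization of Theorem \ref{th.midealortho}(ii) with James's classical description of smoothness: a nonzero element $z$ of a normed linear space is smooth if and only if the relation $\perp_B$ is right-additive at $z$, equivalently the set $z^{\perp}=\{w:z\perp_B w\}$ is a linear subspace. (Indeed $z^{\perp}=\bigcup_{f\in J(z)}\ker f$, which is a subspace precisely when $J(z)$ is a singleton.) Since smoothness, the norm attainment set, and the distance condition are all invariant under scaling, I would first normalize so that $\|T\|=1$; then hypothesis (iii) reads $dist(T,\mathbb{K}(\mathbb{X},\mathbb{Y}))<1$ and all the hypotheses of Theorem \ref{th.midealortho} are in force.

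The key reduction is to upgrade Theorem \ref{th.midealortho}(ii) to the statement: for every $A\in\mathbb{L}(\mathbb{X},\mathbb{Y})$, $T\perp_B A$ if and only if $Tx_0\perp_B Ax_0$. Taking $D=\{x_0\}$, which is trivially closed and connected, hypothesis (i) gives $M_T=D\cup(-D)$, so Theorem \ref{th.midealortho}(ii) applies and yields some $x\in M_T=\{\pm x_0\}$ with $Tx\perp_B Ax$; since Birkhoff-James orthogonality is unchanged under simultaneously negating both arguments, the case $x=-x_0$ delivers the same conclusion as $x=x_0$, so in all cases $Tx_0\perp_B Ax_0$. The converse direction is the elementary sufficiency: if $Tx_0\perp_B Ax_0$ then $\|T+\lambda A\|\geq\|(T+\lambda A)x_0\|=\|Tx_0+\lambda Ax_0\|\geq\|Tx_0\|=\|T\|$ for all $\lambda\in\mathbb{R}$, whence $T\perp_B A$.

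With this equivalence in hand the smoothness of $T$ follows by transporting right-additivity from $Tx_0$ to $T$. Suppose $T\perp_B A$ and $T\perp_B B$. By the equivalence, $Tx_0\perp_B Ax_0$ and $Tx_0\perp_B Bx_0$. Hypothesis (ii) says $Tx_0$ is smooth in $\mathbb{Y}$, so $\perp_B$ is right-additive at $Tx_0$, giving $Tx_0\perp_B(Ax_0+Bx_0)=(A+B)x_0$. Applying the equivalence once more (in the easy direction) I conclude $T\perp_B(A+B)$. Thus $T^{\perp}$ is closed under addition, and since it is automatically closed under scalar multiplication it is a subspace; hence $T$ is smooth.

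The argument is short once Theorem \ref{th.midealortho} is available, so the only point requiring care is the bookkeeping in the reduction step: verifying that the $x=-x_0$ case of Theorem \ref{th.midealortho}(ii) really does deliver orthogonality at $x_0$ itself, and confirming that the sufficiency direction of the operator characterization is the elementary norm estimate above rather than anything needing the $M$-ideal structure. I do not anticipate a serious obstacle, since essentially all of the analytic content (reflexivity of $\mathbb{X}$, the $M$-ideal hypothesis, and the condition $dist(T,\mathbb{K}(\mathbb{X},\mathbb{Y}))<\|T\|$) has already been absorbed into Theorem \ref{th.midealortho}; what remains is the purely formal passage from ``$\perp_B$ right-additive at $Tx_0$'' to ``$\perp_B$ right-additive at $T$.''
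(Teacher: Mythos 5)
Your proposal is correct and is essentially the paper's own argument: normalize $\|T\|=1$, use Theorem \ref{th.midealortho} (with $D=\{x_0\}$) to pass from $T\perp_B A$, $T\perp_B B$ to $Tx_0\perp_B Ax_0$, $Tx_0\perp_B Bx_0$, invoke smoothness of $Tx_0$ via James's right-additivity theorem to get $Tx_0\perp_B(A+B)x_0$, and return to $T\perp_B(A+B)$ by the elementary estimate using $x_0\in M_T$. The only difference is that you spell out the bookkeeping (the $x=-x_0$ case and the subspace formulation of smoothness) that the paper leaves implicit.
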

\begin{proof}
	Without loss of generality assume that $\|T\|=1.$ Let $A,B\in \mathbb{L}(\mathbb{X},\mathbb{Y})$ be such that $T\perp_B A$ and $T\perp_B B.$ Then by Theorem \ref{th.midealortho}, $Tx_0\perp_B Ax_0$ and $Tx_0\perp_B Bx_0.$ Since $Tx_0$ is smooth, by \cite[Th. 4.2]{J}, $Tx_0\perp_B(Ax_0+Bx_0).$ Thus, $T\perp_B (A+B),$ since $x_0\in M_T.$ Hence, $T$ is smooth. This completes the proof of the theorem.
\end{proof}

Combining Theorem \ref{th.smoothnecessary} and Theorem \ref{th.smoothsufficient}, we obtain the following theorem, characterizing the smooth points of $\mathbb{L}(\mathbb{X},\mathbb{Y}),$ where $\mathbb{X}$ is a reflexive Banach space, $\mathbb{Y}$ is a Banach space and $\mathbb{K}(\mathbb{X},\mathbb{Y})$ is an $M-$ideal in $\mathbb{L}(\mathbb{X},\mathbb{Y}).$

\begin{theorem}
	Let $\mathbb{X}$ be a reflexive Banach space and $\mathbb{Y}$ be a Banach space. Let $\mathbb{K}(\mathbb{X},\mathbb{Y})$ be an $M-$ideal in $\mathbb{L}(\mathbb{X},\mathbb{Y}).$ Let $T\in \mathbb{L}(\mathbb{X},\mathbb{Y})$ be such that $M_T\neq \emptyset.$ Then $T$ is smooth if and only if the following conditions hold.\\
	(i) $M_T=\{\pm x_0\}$ for some $x_0\in S_{\mathbb{X}}.$\\
	(ii) $Tx_0$ is smooth point in $\mathbb{Y}$.\\
	(iii) $dist(T,\mathbb{K}(\mathbb{X},\mathbb{Y}))<\|T\|.$ 
\end{theorem}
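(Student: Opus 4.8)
The plan is to prove the two implications separately, each by a direct appeal to one of the two theorems immediately preceding this statement, since the present theorem is precisely their conjunction. No new argument is needed beyond checking that the hypotheses of each component theorem are met.

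First I would treat the necessity direction, namely that $T$ smooth implies conditions $(i)$, $(ii)$, $(iii)$. For this I would simply invoke Theorem \ref{th.smoothnecessary}: its only standing assumptions are that $\mathbb{X},\mathbb{Y}$ are normed linear spaces, that $M_T\neq\emptyset$, and that $T$ is smooth, all of which are part of the present hypotheses. It is worth remarking that this implication does \emph{not} require $\mathbb{X}$ to be reflexive, nor does it use that $\mathbb{K}(\mathbb{X},\mathbb{Y})$ is an $M$-ideal in $\mathbb{L}(\mathbb{X},\mathbb{Y})$; those assumptions are only consumed in the reverse direction. Thus Theorem \ref{th.smoothnecessary} yields $(i)$, $(ii)$, $(iii)$ verbatim.

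Next I would treat the sufficiency direction, that $(i)$, $(ii)$, $(iii)$ together imply $T$ is smooth. Here I would invoke Theorem \ref{th.smoothsufficient}, whose hypotheses are exactly that $\mathbb{X}$ is reflexive, $\mathbb{Y}$ is a Banach space, $\mathbb{K}(\mathbb{X},\mathbb{Y})$ is an $M$-ideal in $\mathbb{L}(\mathbb{X},\mathbb{Y})$, and that conditions $(i)$, $(ii)$, $(iii)$ hold. Since all of these appear among the hypotheses of the present theorem (the first three as structural assumptions on the spaces and the last three supplied by the assumed conditions), Theorem \ref{th.smoothsufficient} applies and delivers the smoothness of $T$.

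Combining the two implications establishes the asserted equivalence. I do not anticipate any genuine obstacle in this proof: the only point requiring care is the bookkeeping of hypotheses, namely observing that the necessity direction holds under weaker assumptions than those stated (so the full strength of reflexivity and the $M$-ideal structure is needed only for sufficiency), and that the conditions $(i)$--$(iii)$ serve dually as the conclusion of Theorem \ref{th.smoothnecessary} and as the hypothesis of Theorem \ref{th.smoothsufficient}.
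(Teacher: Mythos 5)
Your proposal is correct and matches the paper exactly: the paper states this theorem as the combination of Theorem \ref{th.smoothnecessary} (necessity) and Theorem \ref{th.smoothsufficient} (sufficiency), which is precisely your decomposition. Your additional observation that reflexivity and the $M$-ideal hypothesis are consumed only in the sufficiency direction is accurate and consistent with how those two theorems are stated.
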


\begin{remark}
We would like to remark that if $\mathbb{X}^*$ and $\mathbb{Y}$ are separable then $\mathbb{K}(\mathbb{X},\mathbb{Y})$ is also separable. A well-known theorem of Mazur asserts that in a separable Banach space smooth points are dense (see \cite[pp.171]{H}). Recently Martin \cite{M} proved the existence of compact operators that cannot be approximated by norm attaining operators. From these facts it is easy to see that there is a compact operator which is smooth but does not attain its norm. Thus for smoothness of  $T\in \mathbb{L}(\mathbb{X},\mathbb{Y})$ it is not necessary that $ M_T \neq \emptyset.$  However, assuming that $ M_T \neq \emptyset,$ we can raise the following open question.
\end{remark}

\noindent \textbf{Question. } Let $\mathbb{X},\mathbb{Y}$ be normed linear spaces and  $T\in \mathbb{L}(\mathbb{X},\mathbb{Y})$ be such that \\
  (i) $M_T=\{\pm x_0\}$ for some $x_0\in S_{\mathbb{X}}.$\\
	(ii) $Tx_0$ is smooth point in $\mathbb{Y}$.\\
	(iii) $dist(T,\mathbb{K}(\mathbb{X},\mathbb{Y}))<\|T\|.$ \\
		Then whether $T$ is smooth or not.

\end{document}